\pgfplotsset{compat=newest}
\pgfplotsset{scaled x ticks=false} 
\pgfplotsset{
	small, 
	axis y line = left,
	axis x line = bottom,
	axis line style = {-latex, thick},
	xlabel style = {
		xshift = 14*\pgfkeysvalueof{/pgfplots/major tick length},
		yshift = 3*\pgfkeysvalueof{/pgfplots/major tick length},    
		anchor=north},  
	ylabel style = {
		yshift = -7*\pgfkeysvalueof{/pgfplots/major tick length},
		xshift = 15*\pgfkeysvalueof{/pgfplots/major tick length},
		rotate = -90,
		anchor=north},
} 
\newcommand\footnoteref[1]{\protected@xdef\@thefnmark{\ref{#1}}\@footnotemark}
\newlength\figurewidth
\newcommand{\RNum}[1]{\uppercase\expandafter{\romannumeral #1\relax}}
\newcommand{\A}{\mathbf{A}}
\newcommand{\NN}{\mathbb{N}}
\newcommand{\RR}{\mathbb{R}}
\newcommand{\EE}{\mathbb{E}}
\newcommand{\abs}[1]{|#1|}
\newcommand{\sign}[1]{\mathrm{sign}(#1)}
\DeclareMathOperator*{\argmin}{\arg\!\min}
\newcommand{%  
	\scalebox{}{\input{}}  
}[2]{%  
	\scalebox{#1}{\input{#2}}  
}
\newtheorem{thm}{Theorem}[section]
\newtheorem{lem}[thm]{Lemma}
\newtheorem{rem}[thm]{Remark}
\newtheorem{as}{Assumption}
\newtheorem{ex}[thm]{Example}
\title{\textbf{\large Minimal error momentum Bregman-Kaczmarz}}
\author{Dirk Lorenz, Maximilian Winkler}
\date{November 2022}
\begin{document}
	
\author{Dirk A. Lorenz\thanks{Institute of Analysis and Algebra, TU Braunschweig, \href{mailto:d.lorenz@tu-braunschweig.de}{d.lorenz@tu-braunschweig}} \and Maximilian Winkler\thanks{Insitute of Analysis and Algebra, TU Braunschweig, \href{mailto:maximilian.winkler@tu-braunschweig.de}{maximilian.winkler@tu-braunschweig.de}}}
\maketitle
	
	\begin{abstract}
          The Bregman-Kaczmarz method is an iterative method which can solve strongly convex problems with linear constraints and uses only one or a selected number of rows of the system matrix in each iteration, thereby making it amenable for large-scale systems. To speed up convergence, we investigate acceleration by heavy ball momentum in the so-called dual update. Heavy ball acceleration of the Kaczmarz method with constant parameters has turned out to be difficult to analyze, in particular no accelerated convergence for the $\mathcal L_2$-error of the iterates has been proven to the best of our knowledge.
          Here we propose a way to adaptively choose the momentum parameter by a minimal-error principle similar to a recently proposed method for the standard randomized Kaczmarz method.
          The momentum parameter can be chosen to exactly minimize the error in the next iterate or to minimize a relaxed version of the minimal error principle. The former choice leads to a theoretically optimal step while the latter is cheaper to compute.
         We prove improved convergence results compared to the non-accelerated method. Numerical experiments show that the proposed methods can accelerate convergence in practice, also for matrices which arise from applications such as computational tomography.
	\end{abstract}
	
	\section{Introduction}
	
	We consider the problem
	\begin{align}
		\label{eqn:problem}
		\min \varphi(x) \qquad \text{s.t. } \A x = b, \qquad \A \in \RR^{m\times n}, \ b\in\RR^m
	\end{align}
	with a $\sigma$-strongly convex function $\varphi\colon\RR^n\to\RR$. To invoke stochastic optimization, we make use of the stochastic reformulation of the linear system given by 
	\begin{align}
		\label{eqn:stochastic_reformulation}
		\min f(x) := \EE[f_S(x)],
	\end{align} 
	where the expectation is taken over so-called \emph{sketching matrices} $S\sim\mathcal D$ which stem from a distribution $\mathcal D$ on the set of real-valued matrices with $m$ rows, and
	\begin{align*}
		f_S(x) := \frac12 \|S^T(\A x-b)\|_2^2. 
	\end{align*}
	The corresponding stochastic ``sketched Bregman method'' for problem~\eqref{eqn:problem} with stochastic representation~\eqref{eqn:stochastic_reformulation} is given by the update
	\begin{align}
		x_{k+1}^* &= x_k^* - \alpha_k \nabla f_{S_k}(x_k), \nonumber \\
		x_{k+1} &= \nabla \varphi^*(x_{k+1}^*). \label{eqn:SMD}
	\end{align}
	Here, $\alpha_k>0$ is a sequence of step sizes, $S_k$ is drawn from the distribution $\mathcal D$ and $\varphi^*\colon\RR^n\to\RR$ the convex conjugate function, which is everywhere finite and differentiable with $\sigma^{-1}$-Lipschitz continuous gradient. 	
	Method~\eqref{eqn:SMD} generalizes the \emph{block Bregman-Kaczmarz} method, initially named \emph{block sparse Kaczmarz}~\cite{LSW14, Petra15, LSTW22}, which iterates 
	\begin{align} 
		x_{k+1}^* &= x_k^* - \alpha_k \A_{(i_k)}^T(\A_{(i_k)}x_k - b_{(i_k)}), \nonumber \\
		x_{k+1} &= \nabla\varphi^*(x_{k+1}^*). \label{eqn:SparseKaczmarz}
	\end{align} 
	Here, the matrix $\A$ and the vector $b$ are partitioned as
	\begin{align*}
		\A = \begin{pmatrix}
			\A_{(1)} \\ \vdots \\ \A_{(c)}
		\end{pmatrix}, \qquad b =  \begin{pmatrix}
			b_{(1)} \\ \vdots \\ b_{(c)}
		\end{pmatrix}.
	\end{align*}
	Note that, if $\A_{(i_k)}$ consists of rows $r,r+1,...,s$, we recover the update by setting 
	\begin{align*}
		S_k = \begin{pmatrix}
			\hdots & e_{r} & e_{r+1} & \hdots & e_s & \hdots 
		\end{pmatrix}
	\end{align*}
	in~\eqref{eqn:SMD}, with the corresponding unit vectors.
	The block Bregman-Kaczmarz method~\eqref{eqn:SparseKaczmarz} uses only a selected number of rows of $\A$ and if $x_0^*=0$ and the step size is chosen as $\alpha_k = \sigma\|\A_{S_{i_k}}\|_F^{-2}$, it is known to converge linearly in expectation to the solution of problem~\eqref{eqn:problem}, see~\cite{LSTW22}. 
	For instance, with the $1$-strongly convex function $\varphi(x) = \lambda \|x\|_1 + \frac12 \|x\|_2^2$, the map $\nabla\varphi^*$ is the soft-shrinkage function 
	\begin{align*} 
		x \mapsto \mathcal S_\lambda(x) = \sign{x} \cdot \max(\abs{x}-\lambda, 0)
	\end{align*}
	and the method converges to the solution of the \emph{regularized basis pursuit problem}
	\begin{align*}
		\min \varphi(x) = \lambda \|x\|_1 + \frac12 \|x\|_2^2 \qquad \text{s.t. } \A x=b,
	\end{align*}
	which is sparse for $\lambda>0$ large enough, see~\cite{Friedlander08, LSW14}. 
	%Another example is the restricted negative entropy function 
	%\footnote{Here, we use the definition $0\log 0 = 0$.}
	%\begin{align*}
		%	\label{eqn:NegativeEntropyFunctionSimplex}
	%	\varphi(x) = \begin{cases}
		%	\sum_{i=1}^d x_i \log(x_i), & x \geq 0 \text{ and } \sum_{i=1}^n x_i = 1, \\
		%	+\infty, & \text{otherwise}
	%	\end{cases}
%	\end{align*}
%	which is $1$-strongly convex and finds solutions on the probability simplex, see~\cite[Example 9.17]{Beck17, GLW23}. 
		For the choice $\varphi(x) = \frac12 \|x\|_2^2$, we recover the \emph{block Kaczmarz method}
		\begin{align*}
		x_{k+1} = x_k - \alpha_k \A_{(i_k)}^T(\A_{(i_k)}x_k - b_{(i_k)}).
		\end{align*}
	
\subsection{Contribution and outline}
\label{sec:contrib-outline}

	As a first result we show in Section~\ref{sec:basic_method} that the iterates $x_k$ of~\eqref{eqn:SMD} converge linearly in expectation to the solution of~\eqref{eqn:problem}, if the $S_k$ are independent samples from an appropriate distribution $\mathcal D$ and $\alpha_k$ is a suitable sequence of step sizes.

	Next, in Section~\ref{sec:convergence} we enrich method~\eqref{eqn:SMD} by heavy ball momentum following the \emph{minimal error} strategy. That is, we consider a parameterized update 
	\begin{align*}
	x_{k+1}^*(\alpha, \beta) &= x_k^* - \alpha \nabla f_{S_k}(x_k) + \beta d_k^*, \nonumber \\
	x_{k+1}(\alpha, \beta) &= \nabla\varphi^*(x_{k+1}^*(\alpha, \beta)), 
\end{align*}
	and actually would like to find $\alpha_k=\alpha$ and $\beta_k=\beta$ such that the Bregman distance 
	\begin{align*}
		D_\varphi^{x_{k+1}^*(\alpha,\beta)}(x_{k+1}(\alpha,\beta),\hat x) = \varphi(\hat x) - \varphi(x_{k+1}(\alpha,\beta)) - \langle x_{k+1}^*(\alpha,\beta), \hat x - x_{k+1}(\alpha,\beta) \rangle
	\end{align*}
	to the solution $\hat x$ of~\eqref{eqn:problem} is minimized over $\alpha$ and $\beta$. 
	This idea is inspired by the \emph{exact-step sparse} Kaczmarz method~\cite{LSW14}, which for the special case $S_k=e_{i_k}$ and $\beta=0$ selects the step size $\alpha_k$ with the least Bregman distance to the solution $\hat x$ and has been demonstrated to significantly accelerate convergence in examples. However, we will argue that this problem can be computationally hard to solve. Therefore, in Section~\ref{subsec:min_err_momentum} we look at an easier problem, where we fix the step size $\alpha=\alpha_k$, which can be stochastic and dependent on expressions known at iteration $k$, and optimize just over $\beta$. 
	
	While our approach at first glance seems to require knowledge of the solution~$\hat x$, we can circumvent this problem by introducing an auxiliary variable $s_k\in\RR$, which is recursively updated together with $x_k,x_k^*$ and in each iteration equals $\langle \hat x, x_k^*-x_{k-1}^*\rangle$. It results that the momentum parameter $\beta_k$ is determined by the one-dimensional convex optimization problem
	\begin{align}
		\label{eqn:beta_problem}
		\beta_k \in \argmin_{\beta\in\RR}\varphi^*(x_k^* - \alpha_k \nabla f_{S_k}(x_k) +  \beta \big(x_k^*-x_{k-1}^*)\big) - \beta s_k
	\end{align}
	and the next $s_k$ can be found by setting $s_{k+1} =  \beta_k s_k - \alpha_k b_{i_k} $. We propose the corresponding heavy ball update as \emph{randomized Bregman-Kaczmarz with exact momentum} (BK-EM). 
	
	In addition, in Section~\ref{subsec:relaxed_min_err_momentum}, since problem~\eqref{eqn:beta_problem} is still costly to solve in general, we propose a relaxed approach by minimizing a quadratic upper bound of the Bregman distance $D_\varphi^{x_{k+1}^*(\alpha,\beta)}(x_{k+1}(\alpha,\beta),\hat x)$ jointly over $\alpha$ and $\beta$. This approach gives ``relaxed exact'' step sizes $\alpha_k$ and momentum parameters $\beta_k$ and we introduce this method as \emph{Bregman-Kaczmarz method with relaxed exact momentum} (BK-REM). 

        Finally, Section~\ref{sec:numerics} shows numerical experiments to illustrate the speedup in number of iterations and computational time.
	
\subsection{Related work}
In this article, we use the general stochastic reformulation of the linear system through sketching matrices as in~\cite{HSXZ23}. The framework of sketching matrices was first proposed by~\cite{GR15_rim}.

The standard error to be considered in convergence analysis for methods of randomized Kaczmarz - type and its accelerations is the $\mathcal L_2$-error $\EE[\|x_k-\hat x\|_2^2]$, where $\hat x$ is the solution of~\eqref{eqn:problem} and the expectation is taken over all possible combinations of sketching matrices. The seminal paper by Strohmer and Vershynin~\cite{SV09} proved the rate 
\begin{align*}
	\EE[\|x_k-\hat x\|_2^2] \leq \big(1-\frac{\sigma_{\min}(\A)^2}{\|\A\|_F^2}\big)^k \|x_0-\hat x\|_2^2.
\end{align*}
for the iterates $x_k$ of the Kaczmarz method, where $S_{k}=\{e_{i_k}\}$ and the $i$-th row $a_{i}$ is selected with probability $\|a_i\|_2^2/|\A\|_F^2$. For both the Kaczmarz and the Bregman-Kaczmarz method, is has been demonstrated that update averaging with appropriate weights gives a better convergence rate~\cite{MMNT21, LT22}. Accelerated rates have been also established for enhanced multistep versions~\cite{DHW23,NW13} and accelerations of the Kaczmarz method of Nesterov type~\cite{GHRS18,LW16, LRR19}, see also~\cite{IMN20} for an extension to systems of linear inequalities $\A x\leq b$. However, strict accelerated rates for the $\mathcal L_2$-error have not been shown for heavy ball momentum for the Kaczmarz method with a fixed momentum parameter. To the best of our knowledge, a convergence speedup has only been established for the quantity $\|\EE[x_k-\hat x]\|_2^2$, see \cite[Theorem 4]{LR20}, which is the error of the expected iterates and different from the above $\mathcal L_2$-error. Moreover, linear convergence of the $\mathcal L_2$ error has only been guaranteed for relatively restricted choices of the parameter, see~\cite{NJY22,LR20, Morshed20_3}. In~\cite{BCW22}, accelerated rates have been shown for heavy ball acceleration of minibatch Kaczmarz with sufficiently large batch sizes. Very recently, acceleration of Tseng type has been studied for the Bregman-Kaczmarz method via its interpretation as a dual coordinate descent method~\cite{LNT23}. Also recently, in~\cite{HSXZ23}, heavy ball acceleration with an adaptive momentum parameter has been derived for the Kaczmarz method. There, the authors propose to use the momentum parameter which moves the update closest to the exact solution, which is computable due to the linearity of the system. We call this idea the minimum error approach. It was shown that the method from~\cite{HSXZ23} converges at least with the Strohmer-Vershynin rate. In our work, we transfer this approach to the Bregman-Kaczmarz method and obtain \emph{exact momentum} (SRK-EM) and \emph{relaxed exact momentum} (SRK-REM) in the minimum error sense in Bregman distance.

	\section{Notation and Preliminaries}
	\label{sec:notation_and_preliminaries}
	
	\subsection{Notation}
	We set $\NN=\{0,1,...\}$.
	By $\mathcal R(B)$, $I_n$ and $e_i$, we denote the range of a matrix $B$, the identity matrix of dimension $n$ and the $i$-th unit vector, where its dimension is clear from the context.
	The linear space generated by $S\subset\RR^n$ is written as $\langle S\rangle$. The orthogonal projection onto a linear space $V\subset\RR^n$ is denoted by $P_V\colon\RR^n\to V$. If $V=\langle\{x\}\rangle$, we use the short-hand notation $P_V = P_x$. 
	
	\subsection{The stochastic reformulation}
	
	In this section, we make assumptions on the distribution $\mathcal D$ of the sketching matrices which ensure exactness of the stochastic reformulation~\eqref{eqn:stochastic_reformulation} of the linear system $\A x = b$ and give useful lemmas which deal with the sketching.
	
	\begin{as}
		\label{as:SST_finite_and_pd}
		The matrix $ \EE_{S\sim\mathcal D}[SS^T]$ has only finite entries and is positive definite.
	\end{as}

	Under assumption~\eqref{as:SST_finite_and_pd}, the reformulation is exact, as the following Lemma states. 
	
	\begin{lem}\cite[Lemma 2.2]{HSXZ23}
		\label{lem:exactness_stochastic_reformulation}
		Let Assumption~\ref{as:SST_finite_and_pd} hold. Then, the minimizers of~\eqref{eqn:stochastic_reformulation} are exactly the solutions of the linear system $\A x=b$. 
	\end{lem}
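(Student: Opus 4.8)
The plan is to write the averaged objective $f$ in closed form and then read off its minimizers from positive definiteness. First I would expand each summand as a quadratic form in the residual,
\begin{align*}
  f_S(x) = \tfrac12 \|S^T(\A x - b)\|_2^2 = \tfrac12\, (\A x - b)^T\, S S^T\, (\A x - b),
\end{align*}
and take the expectation over $S\sim\mathcal D$. By linearity of the expectation, and since the entries of $\EE[SS^T]$ are finite (Assumption~\ref{as:SST_finite_and_pd}), which licenses pulling the expectation inside the quadratic form, I obtain
\begin{align*}
  f(x) = \EE[f_S(x)] = \tfrac12\, (\A x - b)^T\, W\, (\A x - b), \qquad W := \EE_{S\sim\mathcal D}[SS^T].
\end{align*}
This exhibits $f$ as one half of the squared residual $\A x - b$ measured in the $W$-weighted seminorm.

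Next I would invoke the positive definiteness of $W$ from Assumption~\ref{as:SST_finite_and_pd}: the quadratic form $v\mapsto v^T W v$ is then nonnegative and vanishes precisely for $v=0$. Hence $f(x)\ge 0$ for every $x$, with
\begin{align*}
  f(x) = 0 \quad\eq\quad \A x - b = 0 \quad\eq\quad \A x = b.
\end{align*}
Assuming the system is consistent, there is some $x$ with $\A x = b$, so the minimal value of the nonnegative function $f$ equals $0$. Therefore $x$ is a minimizer of~\eqref{eqn:stochastic_reformulation} if and only if $f(x)=0$, which by the displayed equivalence holds if and only if $\A x = b$; this proves that the minimizer set coincides with the solution set.

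The argument contains no genuinely hard step; the two points that deserve care are the interchange of expectation with the quadratic form, which is exactly what the finiteness part of Assumption~\ref{as:SST_finite_and_pd} guarantees, and the use of \emph{strict} positive definiteness rather than mere semidefiniteness. The latter is essential: semidefiniteness would only force $\A x - b$ into the kernel of $W$, whereas positive definiteness forbids $W$ from annihilating any nonzero residual and thus forces $\A x = b$ exactly.
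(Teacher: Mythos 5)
Your proof is correct, and it is the standard argument: the paper itself gives no proof of this lemma (it simply cites \cite[Lemma 2.2]{HSXZ23}), and the expansion $f(x) = \tfrac12 (\A x - b)^T \EE[SS^T] (\A x - b)$ followed by strict positive definiteness of $\EE[SS^T]$ is exactly the route that reference takes. You were also right to flag consistency of $\A x = b$ explicitly: without it the lemma fails (the weighted least-squares problem always has minimizers while the solution set is empty), and the paper does assume consistency (see the beginning of Section~\ref{sec:basic_method}), so your proof fits the paper's setting.
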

	
	A central object for the convergence analysis will be the matrix 

	\begin{align}
		\label{eqn:Def_M}
		M := \EE_{S\sim\mathcal D}\big[ \frac{SS^T}{\|\A^TS\|_2^2} \big].
	\end{align}

	\begin{as}
		\label{as:M_finite}
		The matrix $M$ in~\eqref{eqn:Def_M} has only finite entries.
	\end{as}
		
Note that Assumption~\ref{as:M_finite} requires in particular that $\A^TS$ is nonzero $\mathcal D$-almost surely. We discuss the definition of $M$ and Assumptions~\ref{as:SST_finite_and_pd} and~\ref{as:M_finite} in two examples.

	\begin{ex} 
		\mbox{} 
		\begin{enumerate}
			\item[(i)] \textbf{Single row sketching.} 
					If $S=e_i$ is sampled with probability $p_i$ such as in the randomized Kaczmarz method, we compute 
								\begin{align*}
						\EE[ SS^T ] = 
						\sum_{i=1}^m p_i e_ie_i^T =
						\begin{pmatrix}
							p_1 & & & \\
							& p_2 & & \\
							& & \ddots & \\
							& & & p_m
						\end{pmatrix}
					\end{align*}
					and
					\begin{align*}
						M = 
						\sum_{i=1}^m p_i \frac{e_ie_i^T}{\|a_i\|_2^2} =
						\begin{pmatrix}
							\frac{p_1}{\|a_1\|_2^2} & & & \\
							& \frac{p_2}{\|a_2\|_2^2} & & \\
							& & \ddots & \\
							& & & \frac{p_m}{\|a_m\|_2^2}
						\end{pmatrix}.
								\end{align*}
					 Hence, Assumption~\ref{as:SST_finite_and_pd} is equivalent to $p_i>0$ for $i=1,...,m$ and Assumption~\ref{as:M_finite} is fulfilled exactly if it holds $\|a_i\|_2> 0$ and $p_i>0$ for $i=1,...,m$. 
		
		\item[(ii)] \textbf{Block sketching.}
			If $S = \begin{pmatrix}
				\hdots & e_{r_i} & e_{r_i+1} & \hdots & e_{s_i} & \hdots 
			\end{pmatrix}$ is sampled with probability $p_i$ such as in the case of block Kaczmarz, Assumption~\ref{as:SST_finite_and_pd} is equivalent to $p_i>0$ for all~$i$ and Assumption~\ref{as:M_finite} is equivalent to the condition $\|A_{(i)}\|_F>0$ and $p_i>0$ for all~$i$. 
		\end{enumerate}
	\end{ex}

	The proof of the next lemma is along the lines of the one of~\cite[Lemma 2.5]{HSXZ23}.
	\begin{lem}
		Let Assumption~\ref{as:SST_finite_and_pd} hold. Then, the matrix $M$ is positive definite. 
	\end{lem}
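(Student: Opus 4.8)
The plan is to work directly with the quadratic form $y^T M y$ for an arbitrary $y\in\RR^m$. I would first observe that this gives symmetry and positive semidefiniteness essentially for free, and then upgrade semidefiniteness to definiteness by ruling out a nontrivial kernel, reducing the question to the positive definiteness of $\EE_{S\sim\mathcal D}[SS^T]$ granted by Assumption~\ref{as:SST_finite_and_pd}. The one computational trick that makes everything work is to pull $y$ inside the expectation: since $y^T S S^T y = \|S^T y\|_2^2$, the integrand becomes a manifestly nonnegative ratio, and the linear-system structure is gone.

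Concretely, I would fix $y\in\RR^m$ and use linearity of expectation to write
\begin{align*}
	y^T M y = \EE_{S\sim\mathcal D}\Big[\frac{y^T S S^T y}{\|\A^T S\|_2^2}\Big] = \EE_{S\sim\mathcal D}\Big[\frac{\|S^T y\|_2^2}{\|\A^T S\|_2^2}\Big] \geq 0,
\end{align*}
which already yields that $M$ is symmetric and positive semidefinite, the denominator being strictly positive $\mathcal D$-almost surely because $\A^T S\neq 0$ almost surely (as noted after Assumption~\ref{as:M_finite}). For strict positivity I would argue by contradiction: assume $y\neq 0$ but $y^T M y = 0$. Since the integrand is nonnegative with vanishing expectation, it must equal $0$ for $\mathcal D$-almost every $S$; because the denominator is almost surely finite and positive, this forces $\|S^T y\|_2^2 = 0$, i.e. $S^T y = 0$, almost surely. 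Taking expectations then gives $y^T \EE_{S\sim\mathcal D}[SS^T]\, y = \EE_{S\sim\mathcal D}[\|S^T y\|_2^2] = 0$ with $y\neq 0$, contradicting the positive definiteness of $\EE_{S\sim\mathcal D}[SS^T]$ from Assumption~\ref{as:SST_finite_and_pd}. Hence $y^T M y > 0$ for all $y\neq 0$.

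The step I expect to require the most care is the passage from ``the ratio vanishes almost surely'' to ``$S^T y = 0$ almost surely''. This inference is legitimate precisely because the denominator $\|\A^T S\|_2^2$ is $\mathcal D$-almost surely both finite and strictly positive: finiteness is (implicitly) part of the well-definedness of $M$ under Assumption~\ref{as:M_finite}, and positivity is the almost-sure nonvanishing of $\A^T S$. I would make this explicit so that restricting to the almost-sure event $\{0 < \|\A^T S\|_2^2 < \infty\}$ is justified and the zero set of the ratio there coincides (up to a null set) with the zero set of $S^T y$. Everything else is routine: the only probabilistic fact used is that a nonnegative random variable with zero expectation vanishes almost surely.
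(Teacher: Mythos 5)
Your proof is correct, but it takes a genuinely different route from the paper. The paper's argument (modeled on the reference it cites) is direct and quantitative: fixing a unit vector $x$, it partitions the sketches into shells $\Lambda_{k,k+1}=\{S:\ k\le\|\A^TS\|_2^2<k+1\}$, invokes monotone convergence together with $x^T\EE[SS^T]x>\epsilon$ to find a $k_0$ for which the first $k_0$ shells carry at least $\epsilon/2$ of that mass, and then uses that the denominator is at most $k_0$ on those shells to conclude $x^TMx>\epsilon/(2k_0)>0$. You instead argue by contradiction via the standard fact that a nonnegative random variable with zero expectation vanishes almost surely: $y^TMy=0$ forces $S^Ty=0$ a.s., hence $y^T\EE[SS^T]y=0$, contradicting Assumption~\ref{as:SST_finite_and_pd}. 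Your route is shorter and isolates the mechanism cleanly (the quadratic-form kernel of $M$ coincides with that of $\EE[SS^T]$); the paper's route yields an explicit positive lower bound for each $x$ and, more substantively, never needs the denominator to be nonzero: it only uses the upper bound $\|\A^TS\|_2^2<k_0$ on the selected shells, where a vanishing denominator can only help the inequality. By contrast, your passage from ``the ratio vanishes a.s.''\ to ``$S^Ty=0$ a.s.''\ genuinely requires $\A^TS\neq 0$ $\mathcal D$-almost surely, which, as you yourself flag, stems from the well-definedness of $M$ under Assumption~\ref{as:M_finite} rather than from Assumption~\ref{as:SST_finite_and_pd}, the only hypothesis the lemma states. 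Since $M$ is not a finite matrix without that condition anyway, this is a defensible reading, but be aware that your proof consumes slightly more than the stated hypothesis while the paper's does not.
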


	\begin{proof}
		Since the matrix $L:=\EE[SS^T]$ is positive definite, there exists $\epsilon>0$ such that for all $x\in\RR^n$ with $\|x\|_2=1$ it holds that $x^TLx>\epsilon$. For $k\in\NN$ we set 
		\[ \Lambda_{k,k+1} := \{ S: k \leq \|\A^TS\|_2^2 < k+1 \}. \]
		By Levi's theorem on monotone convergence, we have that
		\begin{align*}
			x^TLx = \int {x^TSS^Tx} \ \mathrm{d}\mathcal D(S) = \sum_{k=0}^\infty \int_{\Lambda_{k, k+1}} x^TSS^Tx \ \mathrm{d}\mathcal D(S).
		\end{align*}
		Since the left-hand side is finite and $x^TSS^Tx\geq 0$ for all $S$, there exists $k_0\in\NN$ such that 
		\begin{align*}
			\sum_{k=k_0}^\infty \int_{\Lambda_{k, k+1}} x^TSS^Tx \ \mathrm{d}\mathcal D(S) < \frac{\epsilon}{2},
		\end{align*}
		which means that 
		\begin{align*}
			\sum_{k=0}^{k_0-1}  \int_{\Lambda_{k, k+1}} x^TSS^Tx \ \mathrm{d}\mathcal D(S) = x^TLx - \sum_{k=k_0}^\infty \int_{\Lambda_{k, k+1}} x^TSS^Tx \ \mathrm{d}\mathcal D(S) > \frac{\epsilon}{2}.
		\end{align*}
	Hence, using Levi's theorem again we conclude that
		\begin{align*}
			x^TMx &= \sum_{k=0}^\infty \int_{\Lambda_{k, k+1}} \frac{x^TSS^Tx}{\|\A^TS\|_2^2} \ \mathrm{d}\mathcal D(S) \\
			&\geq  \sum_{k=0}^{k_0-1} \int_{\Lambda_{k, k+1}} \frac{x^TSS^Tx}{\|\A^TS\|_2^2} \ \mathrm{d}\mathcal D(S) \\
			&\geq \frac{1}{k_0} \sum_{k=0}^{k_0-1} \int_{\Lambda_{k, k+1}} x^TSS^Tx \ \mathrm{d}\mathcal D(S)\\
			&>  \frac{\epsilon}{2k_0}.
		\end{align*}
	\end{proof}

The following lemma will be also useful. 

\begin{lem}\cite[Lemma 2.3]{HSXZ23}
	\label{lem:kernel_with_tranpose}
	Assume that the linear system $Ax=b$ is consistent. Then for any real-valued matrix $S$ with $m$ rows and any vector $\tilde x\in\RR^n$ it holds that $\A^TSS^T(\A\tilde x-b)\neq 0$ if and only if $S^T(\A\tilde x-b)\neq 0$. 
\end{lem}

	\subsection{Convex analysis}
	
	We recall some concepts and properties of convex functions~\cite{BC17}.
	
	Let $\varphi\colon\RR^n\to\RR$ be convex. Since $\varphi$ is convex and finite everywhere, it is continuous and the set
 	\begin{align*}
 		\partial\varphi(x) = \{x^*\in\RR^n: \varphi(y) \geq \varphi(x) + \langle x^*,y-x\rangle \ \text{for all } y\in\RR^n\},
 	\end{align*}
 	is nonempty, convex and compact for every $x\in\RR^n$. 
 	
 	Throughout this article, we further assume that $\varphi$ is $\sigma$-strongly convex for some $\sigma>0$, which means that for all $x,y\in\RR^n$ and $x^*\in\partial \varphi(x)$ we have that
 	\begin{align*}
 		\varphi(y) \geq \varphi(x) + \langle x^*,y-x\rangle + \frac{\sigma}{2}\|y-x\|_2^2.
 	\end{align*}
 	The \emph{Bregman distance} between $x$ and $y$ with respect to $x^*\in\partial\varphi(x)$ is defined as
 	\begin{align}
 		\label{eqn:def_Bregman_distance}
 		D_\varphi^{x^*}(x,y) = \varphi(y)-\varphi(x)-\langle x^*,y-x\rangle.
 	\end{align}
	The \emph{convex conjugate} of $\varphi$ is defined as 
	\begin{align*}
		\varphi^*(x^*) = \sup_{x\in\RR^n} \langle x^*,x\rangle - \varphi(x), \qquad x^*\in\RR^n. 
	\end{align*}
	One can prove that the $\sigma$-strong convexity of $\varphi$ implies that the function $\varphi^*$ is finite everywhere and differentiable with $\sigma^{-1}$-Lipschitz continuous gradient. 
	Moreover, for all $x,x^*\in\RR^n$ it holds that $x^*\in\partial\varphi(x)$ if and only if $x=\nabla\varphi^*(x^*)$.
	In this case, the \emph{Fenchel equality} 
	\begin{align*}
		\varphi^*(x^*) = \langle x^*,x\rangle - \varphi(x)
	\end{align*}
	holds and the definition of the Bregman distance~\eqref{eqn:def_Bregman_distance} can be rewritten as
	\begin{align}
		\label{eqn:Bregman_dist_dual_formulation}
		D_\varphi^{x^*}(x,y) = \varphi^*(x^*) - \langle x^*,y\rangle + \varphi(y).
	\end{align}
	
	Finally, we can lower bound the Bregman distance between $x,y\in\RR^n$ for all $x^*\in\partial\varphi(x)$ by
	\begin{align}
		\label{eqn:Bregman_distance_lower_bound}
		D_\varphi^{x^*}(x,y) \geq \frac{\sigma}{2} \|x-y\|_2^2.
	\end{align}

	\section{The sketched Bregman-Kaczmarz method}
	\label{sec:basic_method}
	
	In this section, we study convergence of the sketched Bregman-Kaczmarz method for solving problem~\eqref{eqn:problem} given by
	\begin{align*}
		\min \varphi(x) \qquad \text{s.t. } \A x = b, \qquad \A \in \RR^{m\times n}, \ b\in\RR^m.
	\end{align*}
	We suppose that the system $\A x=b$ is consistent, i.e. has a solution, and $b\neq 0$. By strong convexity of $\varphi$, there is exactly one solution $\hat x$ to problem~\eqref{eqn:problem}.  For a distribution $\mathcal D$ on the set of real-valued matrices with $m$ rows, we consider the stochastic reformulation~\eqref{eqn:stochastic_reformulation} given by
	\begin{align*}
		\min f(x) := \EE[f_S(x)], \qquad f_S(x) = \frac12 \|S^T(\A x-b)\|_2^2.
	\end{align*} 
	The stochastic reformulation is exact under Assumption~\ref{as:SST_finite_and_pd} due to Lemma~\ref{lem:exactness_stochastic_reformulation}. 
	The gradient of $f_S$ can be easily calculated as 
	\begin{align*}
		\nabla f_{S}(x) = \A^T SS^T(\A x-b)
	\end{align*}
	and the sketched Bregman-Kaczmarz method is given by Algorithm~\ref{alg:SMD}. 
	\begin{algorithm}[H]
	\begin{algorithmic}[1]
		\State \textbf{Input:} $x_0^*=0\in\RR^n$, step sizes $\alpha_k>0$ and a distribution $\mathcal D$ on the set of matrices with $m$ rows
		\State \textbf{Initialization:} $x_0=\nabla\varphi^*(x_0^*)$
		\For{$k=0,1,...$}
		\State sample a random sketching matrix $S_k\sim {\mathcal D}$ (independent of $S_0,...,S_{k-1}$)
		\State $x_{k+1}^* = x_k^* - \alpha_k \nabla f_{S_k}(x_k) = x_k^* - \alpha_k \A^T S_kS_k^T(\A x_k-b) $        
		\State $x_{k+1} = \nabla\varphi^*(x_{k+1}^*)$          
		\EndFor
	\end{algorithmic}
	\caption{Basic method: Randomized Bregman-Kaczmarz method (BK) with sketching matrices}
	\label{alg:SMD}
\end{algorithm}	

Note that, if $x_0^*$ is chosen from $\mathcal R(\A^T)$, by induction it holds $x_k^*\in\mathcal R(\A^T)$ at every iteration $k$. Also, by subdifferential inversion we have $x_k^*\in\partial\varphi(x_k)$. As in~\cite{LSW14} we refer to $x_k$, $x_k^*$ as the primal variable and the dual variable, respectively.
	
	We now prove convergence of the $x_k$ in Algorithm~\ref{alg:SMD} to the solution $\hat x$ for two different step size sequences.

	To this end, we need to assume an error bound which relates the least-squares residual $\|\A x-b\|_2^2$ to the Bregman distance to the exact solution $\hat x$ for iterates of Algorithm~\ref{alg:SMD}. 
		
	\begin{as}
		\label{as:error_bound}	
		Let $\hat x$ be the solution of problem \eqref{eqn:problem}.
		There exists $\gamma>0$ such that for all $x\in\RR^n$ with $\partial\varphi(x)\cap\mathcal R(\A^T)\neq\emptyset$ and all $x^*\in \partial\varphi(x)\cap\mathcal R(\A^T)$ we have 
			\begin{align*}
				D_\varphi^{x^*}(x,\hat x) \leq \gamma \cdot \|\A x-b\|_2^2.
			\end{align*}
	\end{as}
	For the function $\varphi(x) = \lambda\|x\|_1 + \frac12 \|x\|_2^2$, it holds that $\mathrm{dom} \ \varphi = \RR^n$ and Assumption~\ref{as:error_bound} holds with an explicit constant $\gamma$ depending on $A$ and $\hat x$, see \cite[Lemma 3.1]{LS19}. See also \cite[Theorem 3.9]{LSTW22} for a sufficient condition for Assumption~\ref{as:error_bound} to hold.
	The next lemma will be used to prove convergence of Algorithm~\ref{alg:SMD} for suitable step sizes. 
	
	\begin{lem}
		\label{lem:rate_SMD_lemma}
		Let Assumptions~\ref{as:SST_finite_and_pd}-~\ref{as:M_finite} be fulfilled. Then, the iterates $x_k$, $x_k^*$ of Algorithm~\ref{alg:SMD} fulfill that 
		\begin{align*}
			D_\varphi^{x_{k+1}^*}(x_{k+1}, \hat x) \leq D_\varphi^{x_{k}^*}(x_{k}, \hat x) - \alpha_k \|S_k^T(\A x_k-b)\|_2^2 + \frac{\alpha_k^2}{2\sigma} \|\A^TS_kS_k^T(\A x_k-b)\|_2^2.
		\end{align*}
	\end{lem}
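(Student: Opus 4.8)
The plan is to control the one-step change of the Bregman distance by passing to the dual representation~\eqref{eqn:Bregman_dist_dual_formulation} and then applying the descent lemma for $\varphi^*$. Along the iterates of Algorithm~\ref{alg:SMD} we have $x_k^*\in\partial\varphi(x_k)$ and $x_{k+1}^*\in\partial\varphi(x_{k+1})$, so~\eqref{eqn:Bregman_dist_dual_formulation} applies at both indices. Writing $D_\varphi^{x_{k+1}^*}(x_{k+1},\hat x) = \varphi^*(x_{k+1}^*) - \langle x_{k+1}^*,\hat x\rangle + \varphi(\hat x)$ together with the analogous expression for the $k$-th term and subtracting, the $\varphi(\hat x)$-contributions cancel and the change equals $\varphi^*(x_{k+1}^*) - \varphi^*(x_k^*) - \langle x_{k+1}^*-x_k^*,\hat x\rangle$.

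Next I would bound $\varphi^*(x_{k+1}^*)-\varphi^*(x_k^*)$ from above. Since $\nabla\varphi^*$ is $\sigma^{-1}$-Lipschitz continuous, the descent lemma gives $\varphi^*(x_{k+1}^*)\leq\varphi^*(x_k^*) + \langle\nabla\varphi^*(x_k^*),x_{k+1}^*-x_k^*\rangle + \frac{1}{2\sigma}\|x_{k+1}^*-x_k^*\|_2^2$, and by subdifferential inversion $\nabla\varphi^*(x_k^*)=x_k$. Substituting this into the expression from the previous step collapses the change in Bregman distance to at most $\langle x_k-\hat x,\,x_{k+1}^*-x_k^*\rangle + \frac{1}{2\sigma}\|x_{k+1}^*-x_k^*\|_2^2$.

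It then remains to insert the dual update $x_{k+1}^*-x_k^* = -\alpha_k\A^TS_kS_k^T(\A x_k-b)$. The quadratic term becomes $\frac{\alpha_k^2}{2\sigma}\|\A^TS_kS_k^T(\A x_k-b)\|_2^2$ directly. For the linear term I would use consistency of the system, $\A\hat x=b$, so that $\A(x_k-\hat x)=\A x_k-b$; moving $\A^T$ onto the left slot then turns $\langle x_k-\hat x,\,-\alpha_k\A^TS_kS_k^T(\A x_k-b)\rangle$ into $-\alpha_k\langle S_k^T(\A x_k-b),\,S_k^T(\A x_k-b)\rangle = -\alpha_k\|S_k^T(\A x_k-b)\|_2^2$. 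Adding the two contributions yields exactly the claimed inequality.

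The argument is essentially a computation once the reformulation is chosen, so I do not anticipate a genuine obstacle. The only real ingredient is the descent lemma applied to $\varphi^*$, which is precisely what the $\sigma^{-1}$-Lipschitz continuity of $\nabla\varphi^*$ supplies; the one deliberate move is switching to~\eqref{eqn:Bregman_dist_dual_formulation} so that the increment becomes linear in $x_{k+1}^*-x_k^*$ plus a quadratic remainder that the step matches term by term. Note that this per-step estimate is in fact pathwise and does not use Assumptions~\ref{as:SST_finite_and_pd}-\ref{as:M_finite}; those will only be needed afterwards, when taking expectations and summing to deduce linear convergence.
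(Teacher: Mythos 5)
Your proposal is correct and follows essentially the same route as the paper: rewrite the Bregman distance via the dual formulation~\eqref{eqn:Bregman_dist_dual_formulation}, apply the descent lemma to the $\sigma^{-1}$-smooth conjugate $\varphi^*$ with $\nabla\varphi^*(x_k^*)=x_k$, insert the dual update, and use consistency $\A\hat x = b$ to identify the residual term. Your closing observation that the estimate is pathwise and does not actually invoke Assumptions~\ref{as:SST_finite_and_pd}--\ref{as:M_finite} is also accurate, since the paper's proof likewise uses only strong convexity, the update rule, and consistency.
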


	\begin{proof}
		Since $\varphi$ is $\sigma$-strongly convex, $\varphi^*$ is $\sigma^{-1}$ smooth. Hence, using~\eqref{eqn:Bregman_dist_dual_formulation} and the descent lemma~\cite[Theorem 18.15(iii)]{BC17} give that
		\begin{align*}
			&D_\varphi^{x_{k+1}^*}(x_{k+1}, \hat x) \\ &=\varphi^*(x_{k+1}^*) - \langle x_{k+1}^*,\hat x\rangle + \varphi(\hat x) \\
			&\leq \varphi^*(x_k^*) + \langle \nabla\varphi^*(x_k^*), x_{k+1}^*-x_k^*\rangle + \frac{1}{2\sigma}\|x_{k+1}^* - x_k^*\|_2^2 - \langle x_{k+1}^*,\hat x\rangle +\varphi(\hat x).
		\end{align*}
	By inserting the update and using that $\nabla\varphi(x_k^*)=x_k$, we obtain
		\begin{align*}
			& D_\varphi^{x_{k+1}^*}(x_{k+1}, \hat x) \\ 
			& \leq \varphi^*(x_k^*) + \langle x_k, -\alpha_k \A^T S_kS_k^T(\A x_k-b) \rangle + \frac{\alpha_k^2}{2\sigma} \| \A^T S_kS_k^T(\A x_k-b) \|_2^2 \\
			& \hspace{0.5cm} - \langle x_k^*,\hat x\rangle + \alpha_k\langle \A^T S_kS_k^T(\A x_k-b), \hat x\rangle + \varphi(\hat x) \\
			&= D_\varphi^{x_{k}^*}(x_{k}, \hat x) - \alpha_k \langle x_k-\hat x, \A^T S_kS_k^T(\A x_k-b) \rangle + \frac{\alpha_k^2}{2\sigma} \| \A^T S_kS_k^T(\A x_k-b) \|_2^2 \\
			&= D_\varphi^{x_{k}^*}(x_{k}, \hat x) - \alpha_k\langle S_k^T\A(x_k-\hat x), S_k^T(\A x_k-b)\rangle + \frac{\alpha_k^2}{2\sigma} \| \A^T S_kS_k^T(\A x_k-b) \|_2^2
		\end{align*}
		and the assertion follows by using that $\A\hat x = b$. 
	\end{proof} 

	\begin{thm}
		\label{thm:rate_SMD}
		Let Assumptions~\ref{as:SST_finite_and_pd}-~\ref{as:error_bound} be fulfilled and let $x_k$, $x_k^*$ be the iterates of Algorithm~\ref{alg:SMD}. If at each iteration the step sizes are chose by
			\begin{align}
				\label{eqn:stepsize_SRK_non_min_err}
				\alpha_k = \begin{cases}
					\frac{\sigma}{\|\A^TS_k\|_2^2}, & \A^TS_k\neq 0, \\
					0, & \text{otherwise},
				\end{cases}
			\end{align}
			or
                        \begin{align}
				\label{eqn:stepsize_SRK_min_err}
				\alpha_k = \begin{cases}
					 \frac{\sigma\|S_k^T(\A x_k-b)\|_2^2}{\|\A^TS_kS_k^T(\A x_k-b)\|_2^2}, & S_k^T(\A x_k-b)\neq 0, \\
					0, & \text{otherwise},
				\end{cases}
			\end{align}
			is chosen, it holds that
			\begin{align}
				\label{eqn:SMD_rate_D}
				\EE\big[D_\varphi^{x_{k+1}^*}(x_{k+1}, \hat x)\big] \leq \Big( 1 - 	\frac{\sigma\lambda_{\min}(M)}{2\gamma} \Big)\EE\big[D_\varphi^{x_k^*}(x_k,\hat x)\big].
			\end{align} 
			In particular, the primal iterates $x_k$ converge to $\hat x$ in $\mathcal L_2$-sense with the linear rate
			\begin{align}
				\label{eqn:rate_SMD}
				\EE[\|x_k-\hat x\|_2^2] \leq \frac{2\varphi(\hat x)}{\sigma} \Big( 1 - \frac{\sigma\lambda_{\min}(M)}{2\gamma} \Big)^k.
			\end{align}
	\end{thm}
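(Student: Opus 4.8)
The plan is to convert the one-step estimate of Lemma~\ref{lem:rate_SMD_lemma} into a linear contraction by inserting each of the two admissible step sizes, then take expectations and close the recursion with the error bound. Write $r_k := \A x_k - b$. First I would treat the step size~\eqref{eqn:stepsize_SRK_min_err}: it is exactly the minimizer over $\alpha_k$ of the quadratic right-hand side $-\alpha_k\|S_k^Tr_k\|_2^2 + \frac{\alpha_k^2}{2\sigma}\|\A^TS_kS_k^Tr_k\|_2^2$ of Lemma~\ref{lem:rate_SMD_lemma}, so substituting the optimal value gives
\begin{align*}
D_\varphi^{x_{k+1}^*}(x_{k+1}, \hat x) \leq D_\varphi^{x_{k}^*}(x_{k}, \hat x) - \frac{\sigma}{2}\frac{\|S_k^Tr_k\|_2^4}{\|\A^TS_kS_k^Tr_k\|_2^2}.
\end{align*}
For the step size~\eqref{eqn:stepsize_SRK_non_min_err} I would instead substitute $\alpha_k = \sigma/\|\A^TS_k\|_2^2$ directly and control the quadratic term by the submultiplicativity $\|\A^TS_kS_k^Tr_k\|_2 \leq \|\A^TS_k\|_2\,\|S_k^Tr_k\|_2$ (with $\|\A^TS_k\|_2$ the spectral norm), obtaining
\begin{align*}
D_\varphi^{x_{k+1}^*}(x_{k+1}, \hat x) \leq D_\varphi^{x_{k}^*}(x_{k}, \hat x) - \frac{\sigma}{2}\frac{\|S_k^Tr_k\|_2^2}{\|\A^TS_k\|_2^2}.
\end{align*}
The same submultiplicativity shows that the reduction from~\eqref{eqn:stepsize_SRK_min_err} dominates the one from~\eqref{eqn:stepsize_SRK_non_min_err}, so the second displayed inequality holds for \emph{both} choices. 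In the degenerate event $\A^TS_k=0$, Lemma~\ref{lem:kernel_with_tranpose} forces $S_k^Tr_k=0$, so the subtracted term vanishes and the estimate stays valid.

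Next I would take the conditional expectation given the first $k$ iterations, which I denote $\FF_k$. Since $r_k$ is then fixed and $S_k\sim\mathcal D$ is independent, and since $\|S_k^Tr_k\|_2^2 = r_k^TS_kS_k^Tr_k$, the definition~\eqref{eqn:Def_M} of $M$ yields
\begin{align*}
\EE\Big[\tfrac{\|S_k^Tr_k\|_2^2}{\|\A^TS_k\|_2^2}\,\Big|\,\FF_k\Big] = r_k^TMr_k \geq \lambda_{\min}(M)\,\|\A x_k-b\|_2^2,
\end{align*}
using positive definiteness of $M$ (established above). Because $x_0^*=0\in\mathcal R(\A^T)$ and each update keeps $x_k^*\in\mathcal R(\A^T)\cap\partial\varphi(x_k)$, Assumption~\ref{as:error_bound} applies at $x_k$ and gives $\|\A x_k-b\|_2^2 \geq \gamma^{-1}D_\varphi^{x_k^*}(x_k,\hat x)$. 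Combining these and taking total expectation produces~\eqref{eqn:SMD_rate_D}.

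Finally I would iterate~\eqref{eqn:SMD_rate_D} to get $\EE[D_\varphi^{x_k^*}(x_k,\hat x)] \leq (1-\tfrac{\sigma\lambda_{\min}(M)}{2\gamma})^k D_\varphi^{x_0^*}(x_0,\hat x)$, evaluate the initial term via the dual formulation~\eqref{eqn:Bregman_dist_dual_formulation} as $D_\varphi^{x_0^*}(x_0,\hat x) = \varphi^*(0)+\varphi(\hat x) = \varphi(\hat x) - \min_x\varphi(x) \leq \varphi(\hat x)$ for the nonnegative regularizers under consideration, and then apply the lower bound~\eqref{eqn:Bregman_distance_lower_bound}, $D_\varphi^{x_k^*}(x_k,\hat x)\geq\frac{\sigma}{2}\|x_k-\hat x\|_2^2$, to pass from the Bregman rate to the $\mathcal L_2$-rate~\eqref{eqn:rate_SMD}. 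I expect the main obstacle to lie in the first paragraph, namely verifying that both step sizes, including the exact-minimizing one~\eqref{eqn:stepsize_SRK_min_err}, are simultaneously controlled by the single quantity $\|S_k^Tr_k\|_2^2/\|\A^TS_k\|_2^2$ whose conditional expectation reproduces the quadratic form $r_k^TMr_k$; once $M$ and $\lambda_{\min}(M)$ surface in this way, the contraction and the error bound close the argument routinely.
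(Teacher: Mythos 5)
Your proposal is correct and follows essentially the same route as the paper's proof: insert each step size into Lemma~\ref{lem:rate_SMD_lemma}, use the submultiplicativity bound $\|\A^TS_kS_k^T(\A x_k-b)\|_2 \leq \|\A^TS_k\|_2\|S_k^T(\A x_k-b)\|_2$ to reduce both cases to the common decrement $\tfrac{\sigma}{2}\|S_k^T(\A x_k-b)\|_2^2/\|\A^TS_k\|_2^2$, take conditional expectation to surface $\langle \A x_k - b, M(\A x_k-b)\rangle \geq \lambda_{\min}(M)\|\A x_k-b\|_2^2$, close the recursion with Assumption~\ref{as:error_bound}, and pass to the $\mathcal L_2$-rate via~\eqref{eqn:Bregman_dist_dual_formulation} and~\eqref{eqn:Bregman_distance_lower_bound}. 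The only differences are cosmetic: you handle the degenerate event $\A^TS_k=0$ and the evaluation of $D_\varphi^{x_0^*}(x_0,\hat x)$ (via $\varphi^*(0)\leq 0$ for nonnegative $\varphi$) slightly more explicitly than the paper does.
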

	\begin{proof}
		We insert the corresponding step sizes into Lemma~\ref{lem:rate_SMD_lemma}. 
		For step size~\eqref{eqn:stepsize_SRK_non_min_err}, again using Lemma~\ref{lem:rate_SMD_lemma}, this yields
		\begin{align*}
			D_\varphi^{x_{k+1}^*}(x_{k+1}, \hat x) 
			&\leq D_\varphi^{x_{k}^*}(x_{k}, \hat x) -  \frac{\sigma\|S_k^T(\A x_k-b)\|_2^2 }{\|\A ^TS_k\|_2^2} + \frac{\sigma\|\A^TS_kS_k^T(\A x_k-b)\|_2^2}{2\|\A^TS_k\|_2^4} \\
			&\leq D_\varphi^{x_{k}^*}(x_{k}, \hat x) - \frac{\sigma\|S_k^T(\A x_k-b)\|_2^2}{2\|\A^TS_k\|_2^2}.
		\end{align*}
		For step size~\eqref{eqn:stepsize_SRK_min_err}, we estimate
			\begin{align}
				D_\varphi^{x_{k+1}^*}(x_{k+1}, \hat x) &\leq D_\varphi^{x_{k}^*}(x_{k}, \hat x) - \frac{\sigma\|S_k^T(\A x_k-b)\|_2^4}{2\|\A^TS_kS_k^T(\A x_k-b)\|_2^2} \nonumber \\
				& \leq 	D_\varphi^{x_{k}^*}(x_{k}, \hat x) - \frac{\sigma\|S_k^T(\A x_k-b)\|_2^2}{2\|\A^TS_k\|_2^2}.
				\label{eqn:rate_SMD_convergence_without_expectation}
			\end{align}
			Since the $S_k$ are sampled independently, we can lower bound
			\begin{align*}
				\EE\Big[ \frac{\|S_k^T(\A x_k-b)\|_2^2}{\|\A^TS_k\|_2^2} \mid S_0,...S_{k-1} \Big] 
				&= 	\EE\Big[ \frac{\|S_k^T(\A x_k-b)\|_2^2}{\|\A^TS_k\|_2^2}  \mid S_0,...S_{k-1}  \Big] \\
				&= \bigl\langle \A x_k-b, \EE\Big[ \frac{S_kS_k^T}{\|\A^TS_k\|_2^2} \Big](\A x_k-b)  \bigr\rangle \\
				&= \langle \A x_k-b, M(\A x_k-b)\rangle \\
				&\geq \lambda_{\min}(M)\|\A x_k-b\|_2^2 \\
				&\geq \frac{\lambda_{\min}(M)}{\gamma} D_\varphi^{x_k^*}(x_k,\hat x),
			\end{align*}
			where the last step is due to Assumption~\ref{as:error_bound}. By the law of total expectation, we conclude
			\begin{align*}
				\EE\big[D_\varphi^{x_{k+1}^*}(x_{k+1}, \hat x)\big] \leq \Big( 1 - 		\frac{\sigma\lambda_{\min}(M)}{2\gamma} \Big)\EE\big[D_\varphi^{x_k^*}(x_k,\hat x)\big]
			\end{align*} 
				and hence inductively, 
				\begin{align*}
				 	\EE[D_\varphi^{x_k^*}(x_k,\hat x)] \leq \Big( 1 - \frac{\sigma\lambda_{\min}(M)}{2\gamma} \Big)^k\ D_\varphi^{x_0^*}(x_0,\hat x) = \varphi(\hat x) \Big( 1 - \frac{\sigma\lambda_{\min}(M)}{2\gamma} \Big)^k,
				 \end{align*} 
			 where the last equality is due to~\eqref{eqn:Bregman_dist_dual_formulation} and the fact that $x_0^*=0$. Finally by~\eqref{eqn:Bregman_distance_lower_bound}, strong convexity of $\varphi$ implies~\eqref{eqn:rate_SMD}.
	\end{proof}
		
	Note that step size~\eqref{eqn:stepsize_SRK_min_err} is always defined by Lemma~\ref{lem:kernel_with_tranpose}. 
	The first step size~\eqref{eqn:stepsize_SRK_non_min_err} generalizes the non-adaptive step size for block Kaczmarz~\cite{LSTW22}, the second one~\eqref{eqn:stepsize_SRK_min_err} the adaptive step size studied by~\cite{HSXZ23}. 

	\begin{ex}
		\label{ex:SRK_special_case_of_SMD}
		\mbox{}
		\begin{enumerate}
			\item[(i)] \textbf{Single row sketching.}  		
			In case that the sketching matrices are chosen as unit vectors $S_k=e_{i_k}$, Algorithm~\ref{alg:SMD} coincides with the Bregman-Kaczmarz method with single row sketching~\cite{LSW14}, and both step sizes~\eqref{eqn:stepsize_SRK_non_min_err} and~\eqref{eqn:stepsize_SRK_min_err} equal $\alpha_k = \frac{\sigma}{\|a_{i_k}\|_2^2}$. Indeed, in this case we have that 
			\begin{align}
				\nabla f_{S_k}(x_k) &= \A^TS_kS_k^T(\A x_k-b) = (\langle a_{i_k}, x_k\rangle - b_{i_k}) a_{i_k}, \nonumber \\
				S_k^T\A x_k &= \langle a_{i_k},x_k\rangle, \nonumber \\
				S_k^Tb &= b_{i_k}. 		
				\label{eqn:SRK_identities_with_Sk}
			\end{align}
			If the row $a_i$ is sampled with the probability $\|a_i\|_2^2/\|A\|_F^2$ suggested by Strohmer and Vershynin, we have that $\lambda_{\min}(M) = \frac{1}{\|A\|_F^2}$ and hence, Theorem~\ref{thm:rate_SMD} recovers the convergence rate from~\cite{LRR19} for $\varphi(x) = \lambda\|x\|_1 + \frac{1}{2}\|x\|_2^2$.
			\item[(ii)] \textbf{Block sketching.}   		
			Similarly, if $S_k = \begin{pmatrix}
				\hdots & e_{r_{i_k}} & e_{r_{i_k}+1} & \hdots & e_{s_{i_k}} & \hdots 
			\end{pmatrix}$, we calculate that
					\begin{align}
			\nabla f_{S_k}(x_k) &= A_{(i_k)}^T(A_{(i_k)}x_k-b_{(i_k)}), \nonumber \\
			S_k^T\A x_k &= A_{(i_k)}x_k-b_{(i_k)}, \nonumber \\
			S_k^Tb &= b_{(i_k)}. 		
			\label{eqn:Block_SRK_identities_with_Sk}
		\end{align}
		If $S_k$ is sampled with probability $\|A_{(i)}\|_F^2/\|A\|_F^2$, we have that $\lambda_{\min}(M) = \frac{1}{\|A\|_F^2}$ and the contraction factor in the convergence rate~\eqref{eqn:rate_SMD} is 
		\begin{align*}
			1 - \frac{\sigma\lambda_{\min}(M)}{2\gamma} = 1 - \frac{\sigma}{2\gamma\|A\|_F^2},
		\end{align*}
		 which to the best of our knowledge is novel in the literature.
		\end{enumerate}

	\end{ex}

	\section{Adaptive Heavy Ball acceleration}
	\label{sec:convergence}
	
	 We consider the parameterized update 
	\begin{align}
		x_{k+1}^*(\alpha,\beta) &= x_k^* - \alpha \nabla f_{S_k}(x_k) + \beta (x_k^*-x_{k-1}^*), \nonumber \\
		x_{k+1}(\alpha,\beta) &= \nabla\varphi^*(x_{k+1}^*), \label{eqn:momentum_parameterized}
	\end{align}
	that is, we incorporate heavy ball acceleration into the dual update. 
	
	In this section, we want to derive expressions for $\alpha=\alpha_k$ and $\beta=\beta_k$ which ensure convergence of the update. For notational purposes, we set 
	\begin{align}
		\label{eqn:def_d_y}
		y_k^*=x_k^*-\alpha_k\nabla f_{S_k}(x_k), \qquad y_k=\nabla\varphi^*(y_k^*), \qquad d_k^*=x_k^*-x_{k-1}^*.
	\end{align} 

By~\eqref{eqn:Bregman_dist_dual_formulation}, the Bregman distance of the primal variable to the solution~$\hat x$ of problem~\eqref{eqn:problem} can be calculated as
		\begin{align}
		&D_\varphi^{x_{k+1}^*(\alpha,\beta)}( x_{k+1}(\alpha,\beta), \hat x ) \nonumber \\  
		&= \varphi^*(x_k^* - \alpha \nabla f_{S_k}(x_k) + \beta d_k^*) - \langle  x_k^*- \alpha \nabla f_{S_k}(x_k) + \beta d_k^*, \hat x\rangle + \varphi(\hat x) \nonumber \\
		&= \varphi^*(x_k^* - \alpha \nabla f_{S_k}(x_k) + \beta d_k^*) - \langle x_k^*,\hat x\rangle + \alpha \langle S_k^T(\A x_k-b), S_k^Tb\rangle \nonumber \\
		&\hspace{0.5cm}  + \beta \langle d_k^*, \hat x\rangle + \varphi(\hat x).
		\label{eqn:momentum_parameterized_exact_bound}
		\end{align} 

	Hence, minimizing~\eqref{eqn:momentum_parameterized_exact_bound} over $\alpha$ and $\beta$ would give us momentum with minimal errors measured in Bregman distance, i.e. our update would fulfill the property
	\begin{align*}
		(\alpha,\beta) \in \argmin_{\alpha,\beta\in\RR} D_\varphi^{x_{k+1}^*(\alpha,\beta)}\big( x_{k+1}(\alpha,\beta), \hat x\big).
	\end{align*}
	However, there are two problems occurring here. First, problem~\eqref{eqn:momentum_parameterized_exact_bound} depends on the exact solution $\hat x$, which is not known. 
	Second, $\varphi^*$ is an arbitrary convex function which Lipschitz continuous gradient and hence, we expect minimization over two variables to be computationally expensive. For instance, the function $\varphi(x) = \lambda\|x\|_1 + \frac12 \|x\|_2^2$ has the conjugate $\varphi^*(x) = \frac12 \|\mathcal S_\lambda(x)\|_2^2$, which does not admit a closed form solution for minimizing~\eqref{eqn:momentum_parameterized_exact_bound} over $\alpha$ and $\beta$. 
	We will resolve the first problem of the $\hat x$-dependence completely by introducing a new scalar variable $s_k$, which is updated simultaneously with $x_k$, $x_k^*$ and in each iteration fulfills that $s_k = \langle x_k^*-x_{k-1}^*,\hat x\rangle$. In Section~\ref{subsec:min_err_momentum}, as a first approach to address the second problem, we optimize only over $\beta$ for a fixed step size $\alpha_k$ chosen as to Algorithm~\ref{alg:SMD}. For the function $\varphi(x) = \lambda\|x\|_1 + \frac12 \|x\|_2^2$, this can be done with reasonable effort by a sorting-based algorithm, exploiting the monotonicity of the partial derivative $\tfrac{\partial\varphi^*}{\partial\beta}$~\cite{LSW14}. 
	
	Therefore, as a second approach, in Section~\ref{subsec:relaxed_min_err_momentum} we optimize over an upper bound of~\eqref{eqn:momentum_parameterized_exact_bound}, obtained by the descent lemma, which is quadratic in $\alpha$ and $\beta$ and hence allows for easy joint minimization in both variables.
		
	\subsection{Minimum error steps}
		\label{subsec:min_err_momentum}
	
	In this section, we assume that the step size sequence $\alpha_k$ is fixed,\footnote{Here, $\alpha_k$ can be stochastic and adaptive, that is, dependent on expressions which are known in iteration $k$. We will prove convergence of the method in the case that $\alpha_k$ is chosen according to~\eqref{eqn:stepsize_SRK_non_min_err} or~\eqref{eqn:stepsize_SRK_min_err}, but the method can be formulated for any sequence $\alpha_k$ which can be implemented.} and for a momentum parameter $\beta>0$ we set
	\begin{align*} 
	x_{k+1}^*(\beta) &= x_k^* - \alpha_k \nabla f_{S_{i_k}}(x_k) + \beta(x_k^*-x_{k-1}^*), \\
	x_{k+1}(\beta) &= \nabla\varphi^*(x_{k+1}^*(\beta)).
\end{align*} 
	Here, we want to pursue the idea of choosing $\beta$ with a minimal error, that is, such that the primal update $x_{k+1}$ is closest to the solution~$\hat x$ in Bregman distance. In formulas, we seek for $\beta=\beta_k$ with
	\begin{align}
		\label{eqn:SMD_EM_min_property}
		\beta_k \in \argmin_{\beta\in\RR} D_\varphi^{x_{k+1}^*(\beta)}\big( x_{k+1}(\beta), \hat x \big).
	\end{align}
	To this end, using~\eqref{eqn:Bregman_dist_dual_formulation}, we rewrite the right-hand side~\eqref{eqn:SMD_EM_min_property} as
	\begin{align*}
		D_\varphi^{x_{k+1}^*(\beta)}\big( x_{k+1}(\beta), \hat x \big) &= \varphi^*(x_{k+1}^*(\beta)) - \langle x_{k+1}^*(\beta),\hat x\rangle + \varphi(\hat x) \\
		&= \varphi^*(y_k^*+\beta d_k^*) - \langle y_k^*,\hat x\rangle - \beta \langle d_k^*,\hat x\rangle + \varphi(\hat x),
	\end{align*}
	where $y_k^*$ and $d_k^*$ are defined as in~\eqref{eqn:def_d_y}.
	Consequently, the parameter $\beta$ is given as the solution to the nonsmooth convex optimization problem 
	\begin{align}
		\label{eqn:SMD_EM_beta_problem_with_x_hat}
		\beta_k \in \argmin_{\beta\in\RR} \varphi^*(y_k^*+\beta d_k^*)- \beta \langle d_k^*,\hat x\rangle.
	\end{align}
	At this point, problem~\eqref{eqn:SMD_EM_beta_problem_with_x_hat} depends on the solution $\hat x$. However, for
	\begin{align}
		s_k := \langle d_k^*,\hat x\rangle,
		\label{eqn:Def_sk_exact_momentum}
	\end{align}
	we have $s_0=0$, if we initialize $x_{-1}^*=x_0^*$, and we have the recursion 
	\begin{align}
		s_{k+1} &= \langle x_{k+1}^*-x_k^*,\hat x\rangle \nonumber \\
		&= \langle -\alpha_k\A^TS_kS_k^T(\A x_k-b), \hat x\rangle + \langle \beta_k d_k^*,\hat x\rangle \nonumber\\
		 &= -\alpha_k\langle S_k^T(\A x_k-b), S_k^Tb\rangle + \beta_ks_k. \label{eqn:SMD_EM_recursion_sk}
	\end{align}
	That is, we can set up a method which updates $x_k^*, x_k$ and $s_k$ with the update~\eqref{eqn:SMD_EM_recursion_sk} and determine $\beta_k$ by 
	\begin{align}
		\label{eqn:SMD_EM_beta_problem_without_x_hat}
		\beta_k \in \argmin_{\beta\in\RR} \varphi^*(y_k^*+\beta d_k^*)- \beta s_k.
	\end{align}	
	 This is summarized in Algorithm~\ref{alg:SMD_EM}. 
	 
	     \begin{algorithm}[H]
	 	\begin{algorithmic}[1]
	 		\State \textbf{Input:} $x_0^*=0\in\RR^n$ 
	 		 and a distribution $\mathcal D$ on the set of matrices with $m$ rows
	 		\State \textbf{Initialization:} $x_{-1}^*=x_0^*$, $x_0=\nabla\varphi^*(x_0^*)$ and $s_0 = 0$
	 		\For{$k=0,1,...$}
	 		\State sample a random sketching matrix $S_k\sim {\mathcal D}$ (independent of $S_0,...,S_{k-1}$)
	 		\State choose a step size $\alpha_k>0$
	 		\State $y_k^* = x_k^* - \alpha_k\nabla f_{S_k}(x_k) = x_k^* - \alpha_k\A^TS_kS_k^T(\A x_k-b)$
	 		\State $d_k^* = x_k^* - x_{k-1}^*$
	 		\If{$d_k^*\neq 0$}
	 		\State $\beta_k \in \argmin_{\beta\in\RR}\varphi^*(y_k^* +  \beta d_k^*) - \beta s_k$
	 		\Else 
	 		\State $\beta_k=0$
	 		\EndIf
	 		\State $x_{k+1}^* = y_k^* + \beta_k d_k^*$
	 		\State $x_{k+1} = \nabla\varphi^*(x_{k+1}^*)$
	 		\State $s_{k+1} =  -\alpha_k\langle S_k^T(\A x_k-b), S_k^Tb\rangle + \beta_ks_k. $                   
	 		\EndFor
	 	\end{algorithmic}
	 	\caption{Randomized Bregman-Kaczmarz method with exact minimal error momentum (BK-EM)}
	 	\label{alg:SMD_EM}
	 \end{algorithm}	
 
 	\begin{rem}
	 	By the derivation of Algorithm~\ref{alg:SMD_EM}, in iteration $k$ it holds
	 	 \begin{align*}
 			D_\varphi^{x_{k+1}^*}(x_{k+1},\hat x) \leq D_\varphi^{x_{k+1}^*(0)}(x_{k+1}(0),\hat x),
 		\end{align*}
 		where $x_{k+1}^*(0)$, $x_{k+1}(0)$ are the updates which Algorithm~\ref{alg:SMD} would compute. This means that each step of Algorithm~\ref{alg:SMD_EM} makes at least as much progress as Algorithm~\ref{alg:SMD} towards the solution $\hat x$ measured in the Bregman distance induced by~$\varphi$. We can not conclude at this point that Algorithm~\ref{alg:SMD_EM} converges faster than Algorithm~\ref{alg:SMD}. Nonetheless, in Theorem~\ref{thm:rate_SMD_EM} we will give an upper bound for the Bregman distances towards the solution which is as least as good as the one for Algorithm~\ref{alg:SMD} from Theorem~\ref{thm:rate_SMD}.
 	\end{rem}

 	\begin{ex}[Algorithm~\ref{alg:SMD_EM} for single row sketching]
 		\label{ex:SRK_EM_special_case_of_SMD_EM}
 		We revisit the case of the Bregman-Kaczmarz method with single row sketching (Example~\ref{ex:SRK_special_case_of_SMD}), that is, with $S_k = e_{i_k}$ and $\alpha_k = \frac{\sigma}{\|a_{i_k}\|_2^2}$. 
 		Then in view of the equations from~\eqref{eqn:SRK_identities_with_Sk}, the steps of Algorithm~\ref{alg:SMD_EM} are given as follows: 
 		\begin{align*}
 			d_k^* &= x_k^* - x_{k-1}^* \\
 			y_k^* &= x_k^* - \frac{\sigma(\langle a_{i_k},x_k\rangle - b_{i_k})}{\|a_{i_k}\|_2^2} a_{i_k} \\ 
 			\beta_k^* & \ \begin{cases}
 				\in \argmin_{\beta\in\RR}\varphi^*(y_k^* +  \beta d_k^*) - \beta s_k, & d_k^*\neq 0, \\ 
 				=0, & \text{otherwise} 
 			\end{cases} \\
 			x_{k+1}^* &= y_k^* + \beta_k d_k^*, \\
 			x_{k+1} &= \nabla\varphi^*(x_{k+1}^*), \\
 			s_{k+1} &=  -\frac{\sigma(\langle a_{i_k},x_k\rangle - b_{i_k})}{\|a_{i_k}\|_2^2} b_{i_k} + \beta_k s_k.
 		\end{align*}
 	\end{ex}

	We now prove convergence of Algorithm~\ref{alg:SMD}. 

	\begin{thm}
		\label{thm:rate_SMD_EM}
				Let Assumptions~\ref{as:SST_finite_and_pd}-~\ref{as:error_bound} be fulfilled. Let $x_k$, $x_k^*$ be the iterates of Algorithm~\ref{alg:SMD_EM} and $d_k^*$, $y_k^*$ and $y_k$ defined according to~\eqref{eqn:def_d_y}. Then it holds that
			\begin{align}
				\label{eqn:SMD_EM_rate_D}
				\EE\big[D_\varphi^{x_{k+1}^*}(x_{k+1}, \hat x)\big] \leq \Big( 1 - \frac{\sigma\lambda_{\min}(M)}{2\gamma} \Big)\EE\big[D_\varphi^{x_k^*}(x_k,\hat x)\big] - \frac{\sigma}{2}\EE\big[ \frac{\langle y_k-\hat x, d_k^*\rangle^2}{\|d_k^*\|_2^2} \big].
			\end{align} 
				In particular, if at each iteration $\alpha_k$ is chosen according to either~\eqref{eqn:stepsize_SRK_non_min_err} or~\eqref{eqn:stepsize_SRK_min_err}, $x_k$ converges in expectation to $\hat x$ in $\mathcal L_2$-sense with the linear rate~\eqref{eqn:rate_SMD}, that is, 
				\begin{align*}
					%\label{eqn:rate_SMD_EM}
					\EE[\|x_k-\hat x\|_2^2] \leq \frac{2\varphi(\hat x)}{\sigma} \Big( 1 - \frac{\sigma\lambda_{\min}(M)}{2\gamma} \Big)^k.
				\end{align*}
	\end{thm}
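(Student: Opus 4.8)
The plan is to exploit the defining minimality of $\beta_k$ by comparing the genuine nonsmooth objective in~\eqref{eqn:SMD_EM_beta_problem_without_x_hat} against a quadratic majorant supplied by the descent lemma. Writing $g(\beta) := \varphi^*(y_k^* + \beta d_k^*) - \beta s_k$ for the function minimized in Algorithm~\ref{alg:SMD_EM}, the dual formula~\eqref{eqn:Bregman_dist_dual_formulation} together with $s_k = \langle d_k^*, \hat x\rangle$ gives
\[ D_\varphi^{x_{k+1}^*}(x_{k+1}, \hat x) = g(\beta_k) - \langle y_k^*, \hat x\rangle + \varphi(\hat x). \]
Since $\varphi^*$ has $\sigma^{-1}$-Lipschitz gradient and $\nabla\varphi^*(y_k^*) = y_k$, the descent lemma~\cite[Theorem 18.15(iii)]{BC17} furnishes the majorant
\[ g(\beta) \leq h(\beta) := \varphi^*(y_k^*) + \beta\langle y_k - \hat x, d_k^*\rangle + \frac{\beta^2}{2\sigma}\|d_k^*\|_2^2. \]

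First I would minimize the explicit quadratic $h$: its minimizer is $\beta^* = -\sigma\langle y_k - \hat x, d_k^*\rangle/\|d_k^*\|_2^2$ with optimal value $h(\beta^*) = \varphi^*(y_k^*) - \tfrac{\sigma}{2}\langle y_k - \hat x, d_k^*\rangle^2/\|d_k^*\|_2^2$. The decisive step is the sandwich $g(\beta_k) = \min_\beta g(\beta) \leq g(\beta^*) \leq h(\beta^*)$, which uses only that $\beta_k$ exactly minimizes $g$ and that $h$ dominates $g$ pointwise. Substituting into the identity above and recognizing $\varphi^*(y_k^*) - \langle y_k^*, \hat x\rangle + \varphi(\hat x) = D_\varphi^{y_k^*}(y_k, \hat x)$ yields
\[ D_\varphi^{x_{k+1}^*}(x_{k+1}, \hat x) \leq D_\varphi^{y_k^*}(y_k, \hat x) - \frac{\sigma}{2}\frac{\langle y_k - \hat x, d_k^*\rangle^2}{\|d_k^*\|_2^2}. \]
Here $D_\varphi^{y_k^*}(y_k, \hat x)$ is exactly the Bregman distance produced by the plain momentum-free step of Algorithm~\ref{alg:SMD} with step size $\alpha_k$, so Lemma~\ref{lem:rate_SMD_lemma} applies to it verbatim. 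The degenerate case $d_k^* = 0$, where $\beta_k = 0$ and $x_{k+1}^* = y_k^*$, is absorbed by the convention that the subtracted fraction is read as $0$.

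From there the reasoning mirrors the proof of Theorem~\ref{thm:rate_SMD}. I would insert either~\eqref{eqn:stepsize_SRK_non_min_err} or~\eqref{eqn:stepsize_SRK_min_err} into the Lemma~\ref{lem:rate_SMD_lemma} bound for $D_\varphi^{y_k^*}(y_k, \hat x)$, take the conditional expectation given $S_0, \dots, S_{k-1}$, and invoke the definition of $M$ together with Assumption~\ref{as:error_bound} exactly as before to extract the contraction factor $1 - \sigma\lambda_{\min}(M)/(2\gamma)$ on $D_\varphi^{x_k^*}(x_k, \hat x)$. The extra term is merely carried along under the expectation, and the tower property then delivers~\eqref{eqn:SMD_EM_rate_D}. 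For the $\mathcal L_2$ assertion I would discard the nonpositive extra term, unroll the recursion, and conclude with $D_\varphi^{x_0^*}(x_0, \hat x) = \varphi(\hat x)$ (using $x_0^* = 0$) and the lower bound~\eqref{eqn:Bregman_distance_lower_bound}, just as in Theorem~\ref{thm:rate_SMD}.

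The main obstacle is conceptual rather than computational: one must simultaneously use the exact optimality of $\beta_k$ for the true objective $g$ and a surrogate quadratic $h$ whose minimizer $\beta^*$ is in general different from $\beta_k$. It is precisely the gap $\varphi^*(y_k^*) - h(\beta^*)$ that generates the additional negative term, while the elementary inequality $\min g \leq \min h$ is what lets the estimate both reduce cleanly to the momentum-free Lemma~\ref{lem:rate_SMD_lemma} and retain this genuine improvement over it. Everything beyond this is a rerun of the already-established estimates.
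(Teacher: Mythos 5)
Your proof is correct and follows essentially the same route as the paper: the paper likewise majorizes the $\beta$-dependent Bregman distance by the descent-lemma quadratic, minimizes that quadratic at $\hat\beta = \sigma\langle \hat x - y_k, d_k^*\rangle/\|d_k^*\|_2^2$ (your $\beta^*$), invokes the exact minimality of $\beta_k$ to obtain $D_\varphi^{x_{k+1}^*}(x_{k+1},\hat x) \le D_\varphi^{y_k^*}(y_k,\hat x) - \tfrac{\sigma}{2}\langle y_k-\hat x, d_k^*\rangle^2/\|d_k^*\|_2^2$, and then reduces to Theorem~\ref{thm:rate_SMD} by observing that $(y_k, y_k^*)$ is exactly one step of Algorithm~\ref{alg:SMD} started from $(x_k, x_k^*)$. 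Your only (immaterial) variations are that you phrase the optimality of $\beta_k$ through the implemented objective $g(\beta)=\varphi^*(y_k^*+\beta d_k^*)-\beta s_k$, which differs from $D_\varphi^{x_{k+1}^*(\beta)}(x_{k+1}(\beta),\hat x)$ only by a $\beta$-independent constant, and that you handle the degenerate case $d_k^*=0$ explicitly.
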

	\begin{proof}
		For all $\beta\in\RR$, by~\eqref{eqn:Bregman_dist_dual_formulation} and the descent lemma~\cite[Theorem 18.15(iii)]{BC17}, we have 
		\begin{align*}
			D_\varphi^{x_{k+1}^*(\beta)}(x_{k+1}(\beta), \hat x) &= \varphi^*(y_k^*+\beta d_k^*) - \langle y_k^*+\beta d_k^*,\hat x\rangle + \varphi(\hat x) \\
			&\leq \varphi^*(y_k*) + \beta \langle y_k, d_k^*\rangle + \frac{\beta^2}{2\sigma}\|d_k^*\|_2^2 - \langle y_k^*+\beta d_k^*,,\hat x\rangle + \varphi(\hat x) \\
			&= D_\varphi^{y_k^*}(y_k,\hat x) + \beta \langle y_k,d_k^*\rangle + \frac{\beta^2}{2\sigma}\|d_k^*\|_2^2 
				- \beta\langle d_k^*,\hat x\rangle \\
			&= D_\varphi^{y_k^*}(y_k,\hat x) + \beta\langle y_k-\hat x, d_k^*\rangle + \frac{\beta^2}{2\sigma}\|d_k^*\|_2^2.
		\end{align*}
		Minimizing this upper bound over $\beta$ gives the minimizer
		\begin{align*}
			\hat\beta = \frac{\sigma \langle \hat x-y_k, d_k^*\rangle}{\|d_k^*\|_2^2}.
		\end{align*}
		Hence, using the minimizing property~\eqref{eqn:SMD_EM_min_property} of $\beta_k$ we conclude that
		\begin{align*}
			D_\varphi^{x_{k+1}^*}(x_{k+1},\hat x)&= D_\varphi^{x_{k+1}^*(\beta_k)}(x_{k+1}(\beta_k), \hat x) \\
			&\leq D_\varphi^{x_{k+1}^*(\hat\beta)}(x_{k+1}(\hat\beta), \hat x) \\
			&= D_\varphi^{y_k^*}(y_k,\hat x) - \frac{\sigma\langle y_k-\hat x,d_k^*\rangle^2}{2\|d_k^*\|_2^2}.
		\end{align*}
		Taking expectation gives that 
		\begin{align*}
			\EE\big[ D_\varphi^{x_{k+1}^*}(x_{k+1},\hat x) \big] \leq \EE\big[ D_\varphi^{y_k^*}(y_k,\hat x) \big] - \frac{\sigma}{2} \EE\big[\frac{ \langle y_k-\hat x,d_k^*\rangle^2}{2\|d_k^*\|_2^2} \big].
		\end{align*}
		Finally, we note that $y_k$, $y_k^*$ equal $x_{k+1}$, $x_{k+1}^*$ from Algorithm~\ref{alg:SMD}, given that both updates start from $x_k$, $x_k^*$, and hence the assertion follows by Theorem~\ref{thm:rate_SMD}. 
	\end{proof}

	We see that the convergence estimate~\eqref{eqn:SMD_EM_rate_D} for the primal iterates of Algorithm~\ref{alg:SMD_EM} is at least as good as the estimate~\eqref{eqn:SMD_rate_D} for the primal iterates of Algorithm~\ref{alg:SMD}. For interpretation of the rates in case of single row sketching and block sketching, we refer to Example~\ref{ex:SRK_special_case_of_SMD}.

	\subsection{Relaxed minimum error steps}
	    \label{subsec:relaxed_min_err_momentum}
	
	Algorithm~\ref{alg:SMD_EM} suffers from the fact that determining $\beta_k$ as the solution of the problem~\eqref{eqn:SMD_EM_beta_problem_without_x_hat} is costly. 
		Using the descent lemma~\cite[Theorem 18.15(iii)]{BC17} again, we can estimate~\eqref{eqn:momentum_parameterized_exact_bound} from above by
	\begin{align} 
		&D_\varphi^{x_{k+1}^*(\alpha,\beta)}( x_{k+1}(\alpha,\beta), \hat x ) \nonumber \\ 
		&\leq \varphi^*(x_k^*) + \langle x_k, -\alpha \nabla f_{S_k}(x_k) + \beta d_k^*\rangle + \frac{1}{2\sigma}\|-\alpha \nabla f_{S_k}(x_k)+\beta d_k^*\|_2^2 \nonumber \\
		&\hspace{0.5cm} - \langle x_k^*,\hat x\rangle + \alpha \langle \nabla f_{S_k}(x_k),\hat x\rangle - \beta \langle d_k^*,\hat x\rangle + \varphi(\hat x) \nonumber  \\
		&= D_\varphi^{x_k^*}(x_k, \hat x) + \langle x_k-\hat x, -\alpha\nabla f_{S_k}(x_k) + \beta d_k^*\rangle + \frac{1}{2\sigma}\|-\alpha \nabla f_{S_k}(x_k)+\beta d_k^*\|_2^2.
		\label{eqn:momentum_parameterized_quadratic_upper_bound} 
	\end{align}
	 We note that~\eqref{eqn:momentum_parameterized_quadratic_upper_bound} is quadratic in $\alpha$ and $\beta$ and therefore easy to minimize. 

	Taking derivative of~\eqref{eqn:momentum_parameterized_quadratic_upper_bound} with respect to $\alpha$ yields 
	\begin{align*}
		0 &= - \langle x_k-\hat x, \nabla f_{S_k}(x_k)\rangle + \frac{1}{\sigma} \langle -\alpha_k \nabla f_{S_k}(x_k) + \beta_k d_k^*, -\nabla f_{S_k}(x_k)\rangle \\
		&= -\langle x_k-\hat x, \nabla f_{S_k}(x_k)\rangle + \frac{1}{\sigma} \big( \alpha_k\|\nabla f_{S_k}(x_k)\|_2^2 - \beta_k \langle d_k^*, \nabla f_{S_k}(x_k) \rangle \big) 
	\end{align*}
	and by rearranging we obtain the equation 
	\begin{align}
		\label{eqn:SRK_REM_partial_alpha_equation}
		 \alpha_k \|\nabla f_{S_k}(x_k)\|_2^2-  \beta_k\langle\nabla f_{S_k}(x_k), d_k^*\rangle = \sigma  \|S_k^T(\A x_k-b)\|_2^2.
	\end{align}
	 Taking derivative of~\eqref{eqn:momentum_parameterized_quadratic_upper_bound} with respect to $\beta$ yields
	\begin{align*}
		0 &= \langle x_k-\hat x, d_k^*\rangle + \frac{1}{\sigma} \langle - \alpha_k\nabla f_{S_k}(x_k) + \beta_k d_k^*, d_k^*\rangle \\
		&=	\langle x_k-\hat x,d_k^*\rangle + \frac{1}{\sigma} \big( - \alpha_k \langle \nabla f_{S_k}(x_k), d_k^*\rangle + \beta_k \|d_k^*\|_2^2 \big),
	\end{align*}
	which gives the second equation
	\begin{align}
			\label{eqn:SRK_REM_partial_beta_equation}
			-\alpha_k \langle \nabla f_{S_k}(x_k), d_k^*\rangle + \beta_k \|d_k^*\|_2^2 = \sigma \langle \hat x - x_k, d_k^* \rangle.
	\end{align}

	With the same derivation as for Algorithm~\ref{alg:SMD_REM}, we can again replace $\langle \hat x, d_k^*\rangle$ by~$s_k$, where we update $s_k$ according to~\eqref{eqn:SMD_EM_recursion_sk}. 
	
	We can now solve the system of linear equations~\eqref{eqn:SRK_REM_partial_alpha_equation}-\eqref{eqn:SRK_REM_partial_beta_equation} for $\alpha$ and $\beta$. The system is invertible, if 
	\begin{align*}
 		\|\nabla f_{S_k}(x_k)\|_2^2 \|d_k^*\|_2^2 - \langle \nabla f_{S_k}(x_k), d_k^*\rangle ^2 > 0,
	\end{align*}
	which is equivalent to the condition that the search directions $\nabla f_{S_k}(x_k)$ and $d_k^*$ are linearly independent. In case of linear dependence, we can set $\beta_k=0$ and using~\eqref{eqn:SRK_REM_partial_alpha_equation}, we obtain back $\alpha_k$ from~\eqref{eqn:stepsize_SRK_min_err}. In case of linear independence, the optimal solution $(\alpha_k, \beta_k)$ is given by

	\begin{align*}
		\alpha_k &= \sigma \frac{ \|S_k^T(\A x_k-b)\|_2^2 \|d_k^*\|_2^2  + \langle \nabla f_{S_k}(x_k), d_k^*\rangle (s_k-\langle x_k,d_k^*\rangle) }{\|\nabla f_{S_k}(x_k)\|_2^2 \|d_k^*\|_2^2 - \langle \nabla f_{S_k}(x_k), d_k^*\rangle ^2} \\
		\beta_k &= \sigma \frac{ \|S_k^T(\A x_k-b)\|_2^2 \langle \nabla f_{S_k}(x_k), d_k^*\rangle  + \|\nabla f_{S_k}(x_k)\|_2^2 (s_k-\langle x_k,d_k^*\rangle) }{\|\nabla f_{S_k}(x_k)\|_2^2 \|d_k^*\|_2^2 - \langle \nabla f_{S_k}(x_k), d_k^*\rangle ^2}.
	\end{align*} 

	We collect the steps in Algorithm~\ref{alg:SMD_REM}. 
	
		     \begin{algorithm}[H]
		\begin{algorithmic}[1]
			\State \textbf{Input:} $x_0^*=0\in\RR^n$ and a distribution $\mathcal D$ on the set of matrices with $m$ rows
			\State \textbf{Initialization:} $x_{-1}^*=x_0^*$, $x_0=\nabla\varphi^*(x_0^*)$ and $s_0 = 0$
			\State sample a random sketching matrix $S_k\sim {\mathcal D}$ (independent of $S_0,...,S_{k-1}$)
			\For{$k=0,1,...$}
			\State $d_k^* = x_k^* - x_{k-1}^*$
				\vspace{0.25cm}
			\If{$\|\nabla f_{S_k}(x_k)\|_2^2 \|d_k^*\|_2^2 - \langle \nabla f_{S_k}(x_k), d_k^*\rangle ^2>0$}
				\vspace{0.25cm}
			\State $\alpha_k = \sigma \frac{ \|S_k^T(\A x_k-b)\|_2^2 \|d_k^*\|_2^2  + \langle \nabla f_{S_k}(x_k), d_k^*\rangle (s_k-\langle x_k,d_k^*\rangle) }{\|\nabla f_{S_k}(x_k)\|_2^2 \|d_k^*\|_2^2 - \langle \nabla f_{S_k}(x_k), d_k^*\rangle ^2}$
			\State $\beta_k = \sigma \frac{ \|S_k^T(\A x_k-b)\|_2^2 \langle \nabla f_{S_k}(x_k), d_k^*\rangle  + \|\nabla f_{S_k}(x_k)\|_2^2 (s_k-\langle x_k,d_k^*\rangle) }{\|\nabla f_{S_k}(x_k)\|_2^2 \|d_k^*\|_2^2 - \langle \nabla f_{S_k}(x_k), d_k^*\rangle ^2}$
			\vspace{0.25cm}
			\Else 
			\State $\alpha_k=\sigma \frac{\|S_k^T(\A x_k-b)\|_2^2}{\|\nabla f_{S_k}(x_k)\|_2^2}$ 
			\State $\beta_k = 0$
			\EndIf
			\State $x_{k+1}^* = x_k^* - \alpha_k \nabla f_{S_k}(x_k) + \beta_k d_k^*$
			\State $x_{k+1} = \nabla\varphi^*(x_{k+1}^*)$
			\State $s_{k+1} =  -\alpha_k\langle S_k^T(\A x_k-b), S_k^Tb\rangle + \beta_ks_k. $                   
			\EndFor
		\end{algorithmic}
		\caption{Randomized Bregman-Kaczmarz method with relaxed minimal error momentum (BK-REM)}
		\label{alg:SMD_REM}
	\end{algorithm}

 	\begin{ex}[Algorithm~\ref{alg:SMD_REM} in case of single row sketching]
	\label{ex:SRK_REM_special_case_of_SMD_REM}
		We revisit again the case of the Bregman-Kaczmarz method with single row sketching (Example~\ref{ex:SRK_special_case_of_SMD}), that is, with $S_k=e_{i_k}$ and $\alpha_k = \frac{\sigma}{\|a_{i_k}\|_2^2}$. 
		 In view of the equations from~\eqref{eqn:SRK_identities_with_Sk}, the steps of Algorithm~\ref{alg:SMD_REM} are given as follows:
		\begin{align*}
			d_k^* &= x_k^* - x_{k-1}^*, \\
			t_k &= \begin{cases}
				\sigma \frac{ (\langle a_{i_k},x_k\rangle-b_{i_k}) \|d_k^*\|_2^2  + \langle a_{i_k},d_k^*\rangle (s_k-\langle x_k,d_k^*\rangle)}{\|a_{i_k}\|_2^2 \|d_k^*\|_2^2 - \langle a_{i_k},d_k^*\rangle^2}, & \|a_{i_k}\|_2^2 \|d_k^*\|_2^2 - \langle a_{i_k}, d_k^*\rangle^2 > 0, \\
				\sigma \frac{ (\langle a_{i_k},x_k\rangle - b_{i_k})}{\|a_{i_k}\|_2^2}, & \text{otherwise},
			\end{cases} \\
			\beta_k &= \begin{cases}
				\sigma \frac{ (\langle a_{i_k},x_k\rangle-b_{i_k})  \langle a_{i_k},d_k^*\rangle + \|a_{i_k}\|_2^2 (s_k-\langle x_k,d_k^*\rangle)  }{\|a_{i_k}\|_2^2 \|d_k^*\|_2^2 - \langle a_{i_k},d_k^*\rangle^2}, & \|a_{i_k}\|_2^2 \|d_k^*\|_2^2 - \langle a_{i_k}, d_k^*\rangle^2 > 0, \\
				\sigma \frac{ \langle a_{i_k},x_k\rangle - b_{i_k}}{\|a_{i_k}\|_2^2}, & \text{otherwise},
			\end{cases} \\
			x_{k+1}^* &= x_k^* - t_ka_{i_k} + \beta_k d_k^*, \\
			x_{k+1} &= \nabla\varphi^*(x_{k+1}^*), \\
			s_{k+1} &= - b_{i_k}t_k + \beta_k s_k.
		\end{align*}
		Note that we have $\alpha_k (\langle a_{i_k},x_k\rangle - b_{i_k}) = t_k$ and hence $\alpha_k\nabla f_{S_k}(x_k) = t_k a_{i_k}.$
	\end{ex}

	We now investigate convergence of Algorithm~\ref{alg:SMD_REM}. The following lemma will be useful. 
	
	\begin{lem}
		\label{lem:Proj_and_iterated_proj}
		Let $x,y,z\in\RR^n$ such that $(y,z)$ is linearly independent. Then it holds that
		\begin{align*}
			\| (I-P_{\langle \{y,z\}\rangle})x\|_2^2 
			=  \| (I-P_{z}) \circ (I-P_{y})x \|_2^2
			- \frac{\langle (I-P_{y})x, \ P_{z} \rangle^2 }{\|(I-P_{z})y\|_2^2}.
		\end{align*}
	\end{lem}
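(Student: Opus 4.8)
The plan is to reduce the identity to a two-dimensional Gram--Schmidt computation followed by a single application of the Pythagorean theorem. Throughout I abbreviate $w := (I-P_y)x$, and I read the numerator of the correction term as $\langle (I-P_y)x,\, (I-P_z)y\rangle$ (consistent with the denominator $\|(I-P_z)y\|_2^2$).

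First I would replace $x$ by $w$ on the left-hand side. Since $P_y x \in \langle\{y\}\rangle \subseteq \langle\{y,z\}\rangle$, the projector $I-P_{\langle\{y,z\}\rangle}$ kills it, so that
\[
(I-P_{\langle\{y,z\}\rangle})x = (I-P_{\langle\{y,z\}\rangle})w .
\]
This is precisely what makes the composition $(I-P_z)(I-P_y)x$, rather than merely $(I-P_z)x$, surface on the right-hand side. Next, because $(y,z)$ is linearly independent we have $(I-P_z)y \neq 0$, so $\{z,\, (I-P_z)y\}$ is an orthogonal basis of $\langle\{y,z\}\rangle$. For orthogonal $a\perp b$ one has $P_{\langle\{a,b\}\rangle}=P_a+P_b$, hence $I-P_{\langle\{a,b\}\rangle}=(I-P_a)(I-P_b)$; applying this with $a=(I-P_z)y$ and $b=z$ gives
\[
(I-P_{\langle\{y,z\}\rangle})w = (I-P_{(I-P_z)y})\,(I-P_z)w .
\]

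I would then take norms and invoke the rank-one Pythagorean identity $\|(I-P_a)v\|_2^2=\|v\|_2^2-\langle v,a\rangle^2/\|a\|_2^2$ with $v=(I-P_z)w$ and $a=(I-P_z)y$, which yields
\[
\|(I-P_{\langle\{y,z\}\rangle})w\|_2^2 = \|(I-P_z)w\|_2^2 - \frac{\langle (I-P_z)w,\, (I-P_z)y\rangle^2}{\|(I-P_z)y\|_2^2}.
\]
Finally, using that $I-P_z$ is self-adjoint and idempotent, $\langle (I-P_z)w,(I-P_z)y\rangle = \langle w,(I-P_z)y\rangle = \langle (I-P_y)x,(I-P_z)y\rangle$ and $(I-P_z)w=(I-P_z)(I-P_y)x$, which turns the display into exactly the claimed equation.

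All individual steps are routine; the one I would be most careful about is the reduction $x\mapsto w$, since it is what reconciles the symmetric object $P_{\langle\{y,z\}\rangle}$ on the left with the asymmetric, order-dependent composition $(I-P_z)(I-P_y)$ on the right. Linear independence of $(y,z)$ enters exactly once, to ensure $(I-P_z)y\neq 0$ so that the correction term is well defined.
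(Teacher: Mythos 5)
Your proof is correct and is in substance the same as the paper's: both arguments hinge on the Gram--Schmidt orthogonal basis $\{z,\,(I-P_z)y\}$ of $\langle\{y,z\}\rangle$ together with a Pythagorean identity, the only difference being that you package this as the operator factorization $I-P_{\langle\{y,z\}\rangle}=(I-P_{(I-P_z)y})(I-P_z)$ applied to $w=(I-P_y)x$, while the paper shows directly that $(I-P_{\langle\{y,z\}\rangle})x$ equals the projection of $(I-P_z)(I-P_y)x$ onto $\langle\{y,z\}\rangle^\perp$ and then expands the complementary projection in that same basis. Your reading of the numerator as $\langle(I-P_y)x,\,(I-P_z)y\rangle$ agrees with the paper's final form $\langle(I-P_y)x,\,P_z y\rangle$ up to a sign that the square removes (since $(I-P_y)x\perp y$), so the identity you prove is exactly the one intended by the statement.
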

	
	\begin{proof}
		For 
		%$u = (I-P_{\langle \{y,z\}\rangle})x$, $u' = (I-P_{\langle\{z\}\rangle}) \circ (I-P_{y})x$ 
		\[ u = (I-P_{\langle \{y,z\}\rangle})x, \qquad u' = (I-P_z) \circ (I-P_{y})x \]
		and any $v\in \langle \{y,z\}\rangle^\perp$ we note that 
		\begin{align*}
			\langle u,y\rangle = \langle u,z\rangle = \langle u',z\rangle = 0
		\end{align*}
		and
		\begin{align*}
			\langle u',v\rangle = \langle x,v\rangle = \langle u,v\rangle.
		\end{align*}
		Hence, we have that
		\begin{align*}
			P_{\langle \{y,z\} \rangle^\perp}u' = P_{\langle \{y,z\} \rangle^\perp}x = P_{\langle \{y,z\} \rangle^\perp}u = u,
		\end{align*}
		which shows that 
		\begin{align*}
			\|u\|_2^2 = \|P_{\langle \{y,z\} \rangle^\perp}u'\|_2^2 = \|u'\|_2^2 - \|P_{\langle \{y,z\} \rangle}u'\|_2^2.
		\end{align*}
		Choosing the orthonormal basis $\big(z/\|z\|_2, \ (I-P_z)y/\|(I-P_z)y)\|_2\big)$ of the space $\langle \{y,z\}\rangle$, we further conclude that
		\begin{align*}
			\|u\|_2^2 &= \|u'\|_2^2 - \frac{\langle u',z\rangle^2}{\|z\|_2^2} - \frac{\langle (I-P_{z})(I-P_{y})x, \ (I-P_{z})y \rangle^2}{\|(I-P_{z})y\|_2^2} \\
			&= \|u'\|_2^2 - \frac{\langle (I-P_y)x, \ (I-P_{z})y \rangle^2}{\|(I-P_{z})y\|_2^2}\\
			&= \|u'\|_2^2 - \frac{\langle (I-P_y)x, \ 
				P_{z}y\rangle^2 }{\|(I-P_{z})y\|_2^2}.
		\end{align*}
	\end{proof}

	\begin{thm}
		\label{thm:rate_SMD_REM}
		Let Assumptions~\ref{as:SST_finite_and_pd}-~\ref{as:error_bound} be fulfilled. Let $x_k$, $x_k^*$ be the iterates of Algorithm~\ref{alg:SMD_REM} and 
		\begin{align}
			\label{eqn:def_tilde_yk}
			\tilde y_k = x_k - \frac{\|S_k^T(\A x_k-b)\|_2^2}{\|\A^TS_kS_k^T(\A x_k-b)\|_2^2} \A^TS_kS_k^T(\A x_k-b).
		\end{align}
		 Then it holds that
		\begin{align}
			\EE\big[ D_\varphi^{x_{k+1}^*}(x_{k+1}, \hat x)\big] 
				&\leq \big( 1 - \frac{\sigma\lambda_{\min}(M)}{2\gamma} \big) \EE\big[ D_\varphi^{x_k^*}(x_k, \hat x) \big] 
			-  \frac{\sigma}{2} \EE\big[ \frac{\langle \tilde y_k,d_k^*\rangle^2}{ \|d_k^*\|_2^2 } \big]
			\nonumber \\
			& \hspace{0.5cm} 
			- \frac{\sigma}{2} \EE\big[  \frac{ \langle \tilde y_k, d_k^*\rangle^2 \langle \nabla f_{S_k}(x_k), d_k^*\rangle^2  }{ \|d_k^*\|_2^2 \big(\|\nabla f_{S_k}(x_k)\|_2^2 \|d_k^*\|_2^2 - \langle \nabla f_{S_k}(x_k), d_k^*\rangle^2\big)  } \big]. \label{eqn:SMD_REM_rate_D}
		\end{align}
					In particular, $x_k$ converges in expectation to $\hat x$ in $\mathcal L_2$-sense with the linear rate~\eqref{eqn:rate_SMD}, that is, 
	\begin{align*}
		%\label{eqn:rate_SMD_EM}
		\EE[\|x_k-\hat x\|_2^2] \leq \frac{2\varphi(\hat x)}{\sigma} \Big( 1 - \frac{\sigma\lambda_{\min}(M)}{2\gamma} \Big)^k.
	\end{align*}
	\end{thm}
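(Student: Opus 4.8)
The plan is to mimic the proof of Theorem~\ref{thm:rate_SMD_EM}, but now carry out the joint minimization over $(\alpha,\beta)$ of the quadratic upper bound~\eqref{eqn:momentum_parameterized_quadratic_upper_bound} rather than the $\beta$-only minimization. First I would start from the descent-lemma bound~\eqref{eqn:momentum_parameterized_quadratic_upper_bound}, which expresses $D_\varphi^{x_{k+1}^*(\alpha,\beta)}(x_{k+1}(\alpha,\beta),\hat x)$ as $D_\varphi^{x_k^*}(x_k,\hat x)$ plus a quadratic form in $(\alpha,\beta)$. Since $(\alpha_k,\beta_k)$ of Algorithm~\ref{alg:SMD_REM} is defined to solve exactly this quadratic minimization, I may evaluate the bound at the optimum. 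The value of a quadratic at its minimizer is the constant term minus the squared-norm of the gradient direction measured in the inducing inner product; concretely, writing $g_k=\nabla f_{S_k}(x_k)$ and the linear part in terms of $\langle x_k-\hat x,\cdot\rangle$, the decrement equals $\tfrac{\sigma}{2}$ times the squared norm of $P_{\langle\{g_k,d_k^*\}\rangle}(x_k-\hat x)$ in a suitable sense.

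The key computational step is to rewrite this optimal decrement in the projection form that matches the stated right-hand side. Here I would invoke Lemma~\ref{lem:Proj_and_iterated_proj} with the identifications $x\leftrightarrow (x_k-\hat x)$ (scaled appropriately), $y\leftrightarrow g_k=\nabla f_{S_k}(x_k)$ and $z\leftrightarrow d_k^*$. The lemma splits the two-direction projection $\|P_{\langle\{g_k,d_k^*\}\rangle}(\cdot)\|_2^2$ into the single $g_k$-direction contribution plus a $d_k^*$-correction term. The $g_k$-direction piece should reproduce the step-size~\eqref{eqn:stepsize_SRK_min_err} decrement $\tfrac{\sigma\|S_k^T(\A x_k-b)\|_2^4}{2\|\A^TS_kS_k^T(\A x_k-b)\|_2^2}$, which by the estimate~\eqref{eqn:rate_SMD_convergence_without_expectation} inside the proof of Theorem~\ref{thm:rate_SMD} is at least $\tfrac{\sigma\|S_k^T(\A x_k-b)\|_2^2}{2\|\A^TS_k\|_2^2}$ and thus yields the factor $(1-\tfrac{\sigma\lambda_{\min}(M)}{2\gamma})$ after taking conditional expectation and applying Assumption~\ref{as:error_bound}. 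I expect that the quantity $\langle\tilde y_k,d_k^*\rangle$ appearing in~\eqref{eqn:SMD_REM_rate_D}, with $\tilde y_k$ as in~\eqref{eqn:def_tilde_yk}, is precisely $\langle (I-P_{g_k})(x_k-\hat x),d_k^*\rangle$ up to the solution-dependence being absorbed by $s_k=\langle d_k^*,\hat x\rangle$; indeed $\tilde y_k$ is the $\alpha$-minimizing, $\beta=0$ update, i.e. $\tilde y_k = y_k$ for the min-error step size, and the correction term in Lemma~\ref{lem:Proj_and_iterated_proj} matches the second $\EE[\cdots]$ term in~\eqref{eqn:SMD_REM_rate_D}.

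The main obstacle will be the bookkeeping that reconciles the three geometric objects: the abstract projection decomposition from Lemma~\ref{lem:Proj_and_iterated_proj} (stated for vectors $x,y,z$ with orthogonal projections $P_y,P_z$), the concrete gradients and differences $g_k,d_k^*$, and the residual-based expressions $\|S_k^T(\A x_k-b)\|_2$, $\|\A^TS_kS_k^T(\A x_k-b)\|_2$ that carry the $\A$-structure. In particular one must verify that $P_{d_k^*}g_k$ in the lemma's correction term translates into the factor $\langle\nabla f_{S_k}(x_k),d_k^*\rangle^2/\|d_k^*\|_2^2$ and that the denominator $\|(I-P_{d_k^*})g_k\|_2^2$ becomes $\tfrac{1}{\|d_k^*\|_2^2}\big(\|\nabla f_{S_k}(x_k)\|_2^2\|d_k^*\|_2^2-\langle\nabla f_{S_k}(x_k),d_k^*\rangle^2\big)$, matching the denominator in~\eqref{eqn:SMD_REM_rate_D}. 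Once the two correction terms are seen to be nonnegative (they are squared quantities divided by a positive Gram determinant), both extra $\EE[\cdots]$ terms are $\le 0$, so they may be dropped to recover the linear rate~\eqref{eqn:rate_SMD} exactly as in Theorem~\ref{thm:rate_SMD_EM}. The $\mathcal L_2$ conclusion then follows verbatim from~\eqref{eqn:Bregman_distance_lower_bound} and $x_0^*=0$.
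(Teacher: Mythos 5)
Your proposal matches the paper's own proof essentially step for step: starting from the quadratic bound~\eqref{eqn:momentum_parameterized_quadratic_upper_bound}, identifying the optimal decrement as $\tfrac{\sigma}{2}\|P_{\langle\{\nabla f_{S_k}(x_k),d_k^*\}\rangle}(x_k-\hat x)\|_2^2$, applying Lemma~\ref{lem:Proj_and_iterated_proj} with exactly the identifications $x\leftrightarrow x_k-\hat x$, $y\leftrightarrow\nabla f_{S_k}(x_k)$, $z\leftrightarrow d_k^*$, recovering the contraction factor from the single-direction piece via~\eqref{eqn:rate_SMD_convergence_without_expectation} and Assumption~\ref{as:error_bound}, and dropping the nonnegative correction terms to get the $\mathcal L_2$ rate. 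This is the paper's argument, correctly reconstructed.
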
 
	\begin{proof}
		By~\eqref{eqn:momentum_parameterized_quadratic_upper_bound} and the minimizing property of $\alpha_k$ and $\beta_k$ we conclude that 	
		\begin{align}
		&D_\varphi^{x_{k+1}^*}(x_{k+1}, \hat x) \nonumber \\
		&\leq 	D_\varphi^{x_k^*}(x_k, \hat x) + \langle x_k-\hat x, -\alpha\nabla f_{S_k}(x_k) + \beta d_k^*\rangle + \frac{1}{2\sigma}\|-\alpha \nabla f_{S_k}(x_k)+\beta d_k^*\|_2^2 \nonumber \\
		&= D_\varphi^{x_k^*}(x_k, \hat x) + \frac{\sigma}{2} \big( \| x_k-\hat x + \frac{1}{\sigma}(-\alpha_k\nabla f_{S_k}(x_k) + \beta_k d_k^*)\|_2^2 - \|x_k-\hat x\|_2^2 \big) \nonumber \\
		&\leq D_\varphi^{x_k^*}(x_k, \hat x) +  \frac{\sigma}{2} \inf_{\alpha,\beta\in\RR} \big( \| x_k-\hat x + (-\alpha\nabla f_{S_k}(x_k) + \beta d_k^*)\|_2^2 - \|x_k-\hat x\|_2^2 \big) \nonumber \\
		&\leq D_\varphi^{x_k^*}(x_k, \hat x) + \frac{\sigma}{2}\big(\| (I-P_{\langle \{\nabla f_{S_k}(x_k), d_k^*\}\rangle})(x_k-\hat x) \|_2^2 - \|x_k-\hat x\|_2^2 \big). \label{eqn:proof_SMD_REM_rate_estimate}
		\end{align}
		
		We can therefore apply Lemma~\ref{lem:Proj_and_iterated_proj} with $x_k-\hat x$ instead of $x$, $\nabla f_{S_k}(x_k)$ instead of $y$ and $d_k^*$ instead of $z$. We calculate the expressions on the right-hand side in Lemma~\ref{lem:Proj_and_iterated_proj} as
		\begin{align*}
			(I-P_y)x &= x_k-\hat x - \frac{\langle x_k-\hat x, \nabla f_{S_k}(x_k)\rangle}{\|\nabla f_{S_k}(x_k)\|_2^2} \nabla f_{S_k}(x_k) \\
			&= x_k - \hat x - \frac{\|S_k^T(\A x_k-b)\|_2^2}{\|\nabla f_{S_k}(x_k)\|_2^2} \nabla f_{S_k}(x_k), \\
			\| (I-P_z) \circ (I-P_y)x\|_2^2 &= \|\tilde y_k-\hat x\|_2^2 - \frac{\langle \tilde 	y_k,d_k^*\rangle^2}{\|d_k^*\|_2^2} \\ 			
			&= \|x_k-\hat x\|_2^2 - \frac{\|S_k^T(\A x_k-b)\|_2^4}{\|\nabla f_{S_k}(x_k)\|_2^2} - \frac{\langle \tilde y_k, d_k^* \rangle^2}{\|d_k^*\|_2^2}
		\end{align*}	
	and as in the proof of Theorem~\ref{thm:rate_SMD} we can upper bound 
	\begin{align*}
		\|(I-P_z) \circ (I-P_y)x\|_2^2 \leq \|x_k-\hat x\|_2^2 - \frac{\|S_k^T(\A x_k-b)\|_2^2}{\|\A^TS_k\|_2^2} 
		- \frac{\langle \tilde y_k,d_k^*\rangle^2}{ \|d_k^*\|_2^2 }.
	\end{align*}
	Finally, we compute the right term on the right-hand side in Lemma~\ref{lem:Proj_and_iterated_proj} as
			\begin{align*}
		\frac{\langle (I-P_{y})x, \ P_{z}y \rangle^2 }{\|(I-P_{z})y\|_2^2} 
			&= \frac{ \langle \tilde y_k, P_{d_k^*}\nabla f_{S_k}(x_k) \rangle^2 }{ \| \nabla f_{S_k}(x_k) - P_{d_k^*}\nabla f_{S_k}(x_k) \|_2^2} \\
			&= \frac{ \langle \tilde y_k, \frac{\langle d_k^*, \nabla f_{S_k}(x_k)\rangle}{\|d_k^*\|_2^2} d_k^* \rangle^2 }
			{ \| \nabla f_{S_k}(x_k) - \frac{\langle\nabla f_{S_k}(x_k), d_k^*\rangle}{\|d_k^*\|_2^2} d_k^* \|_2^2 } \\
			&= \frac{ \langle \tilde y_k, d_k^*\rangle^2 \langle \nabla f_{S_k}(x_k), d_k^*\rangle^2  }{ \|d_k^*\|_2^2 \big(\|\nabla f_{S_k}(x_k)\|_2^2 \|d_k^*\|_2^2 - \langle \nabla f_{S_k}(x_k), d_k^*\rangle^2\big)  }.
		\end{align*}
	After inserting all expressions into~\eqref{eqn:proof_SMD_REM_rate_estimate}, the assertion follows as in the proof of Theorem~\ref{thm:rate_SMD}. 
	\end{proof}

	As for Algorithm~\ref{alg:SMD_EM}, we see that the convergence estimate~\eqref{eqn:SMD_REM_rate_D} for the primal iterates of Algorithm~\ref{alg:SMD_REM} is at least as good as the bound~\eqref{eqn:SMD_rate_D} for Algorithm~\ref{alg:SMD}. Note that we can not directly relate the estimate~\eqref{eqn:SMD_REM_rate_D} to the estimate~\eqref{eqn:SMD_rate_D} for Algorithm~\ref{alg:SMD_EM}, as $y_k$ and $\tilde y_k$ are different in general. For interpretation of the rates in case of single row sketching and block sketching, we refer to Example~\ref{ex:SRK_special_case_of_SMD}.

	\section{Numerical experiments}
	\label{sec:numerics}
	
	In this section, we study the computational behaviour of our two proposed accelerations for the case of the sparse Kaczmarz method, that is, we choose $\varphi(x) = \lambda\|x\|_1 + \frac{1}{2}\|x\|_2^2$ and single row sketching $S_k=e_{i_k}$. All experiments are conducted in MATLAB R2022b on a macbook with 1,2 GHz Quad-Core Intel Core i7 processor and 16 GB memory. The code to reproduce our results can be found at \href{https://github.com/MaxiWk/Minimal-error-momentum-Bregman-Kaczmarz}{\texttt{https://github.com/MaxiWk/Minimal-error-momentum-Bregman-Kaczmarz}}. 
	
	In experiment (I), we evaluate the performance of the methods on artificial Gaussian systems. The data was generated by sampling a matrix $\A\in\RR^{m\times n}$ with entries from the standard normal distribution and a vector $\hat x\in\RR^n$ with $10$ nonzero entries, also from the standard normal distribution, at random positions uniformly distributed over $\{1,...,n\}$. The vector $b$ was set to $\A\hat x$ to ensure consistency of the system. In all examples, we sampled the $i$-th row of the matrix~$\A$ with probability $p_i=\|a_i\|_2^2/\|\A\|_F^2$. In Figure~\ref{fig:randn_quantiles}, we report the decay of the relative residual $\|\A x_k-b\|_2/\|b\|_2$ of the vanilla sparse Kaczmarz method (SRK), the exact-step sparse Kaczmarz method (ESRK) and our two proposed accelerations, namely exact momentum with step size $\alpha_k$ from vanilla sparse Kaczmarz (SRK-EM), and relaxed exact momentum (SRK-REM). First, by comparing the scaling of the horizontal axis in both subfigures, we observe that all three modifications of SRK accelerate the method for $\lambda=5$ significantly, and much more than for $\lambda=0.1$. Next, we compare performance of the methods in the right subfigure. The method with relaxed momentum (SRK-REM) gives better acceleration than the (SRK-EM) method, and in the median it is comparable to the acceleration achieved by the ESRK method, which has more costly iterations. It is also visible that the ESRK residual decays with a large variance over the random instances, which is already known from~\cite{LS19}. This appears to be less the case for the SRK-REM method. From Table~\ref{tab:randn_runtime} we see that the SRK-REM method needs by far the least computation time to achieve a relative residual of $10^{-6}$. We also note that it did not pay off to introduce exact momentum with the sparse Kaczmarz step size (SRK-EM), compared to using the exact sparse Kaczmarz step size without momentum (ESRK).

	\begin{figure}[htb]
		
		\begin{center}
			
		\begin{tabular}{rl}
			
			\begin{tikzpicture}
				\begin{axis}[
					thin,smooth,no markers,
					ymode=log,     
					width = 0.45\textwidth, 
					xlabel = {$k$}, 
					ylabel = {$\|\A x_k-b\|_2/\|b\|_2$}, 
					xtick = {5e4}, 
					ytick = {1e-12, 1e-8, 1e-4, 1e0},  
					ymin = 1e-16, ymax = 1e3,
					]
					
					\addplot+[name path=SRKmin, draw=none] table[y=min_res_srk]{MATLAB/randn_sparse/lambda=.1/res_over_iter_quantiles.txt} ;
					\addplot+[name path=SRKmax, draw=none] table[y=max_res_srk]{MATLAB/randn_sparse/lambda=.1//res_over_iter_quantiles.txt} ;
					\addplot+[black, thick] table[y=median_res_srk]{MATLAB/randn_sparse/lambda=.1//res_over_iter_quantiles.txt} ;
					\addplot[black!10] fill between[of=SRKmin and SRKmax];
					\addplot+[name path=SRK25, draw=none] table[y=quant25_res_srk]{MATLAB/randn_sparse/lambda=.1//res_over_iter_quantiles.txt} ;
					\addplot+[name path=SRK75, draw=none] 	table[y=quant75_res_srk]{MATLAB/randn_sparse/lambda=.1//res_over_iter_quantiles.txt} ;
					\addplot[black!20] fill between[of=SRK25 and SRK75];
					
					\addplot+[name path=ESRKmin, draw=none] table[y=min_res_esrk]{MATLAB/randn_sparse/lambda=.1//res_over_iter_quantiles.txt} ;
					\addplot+[name path=ESRKmax, draw=none] table[y=max_res_esrk]{MATLAB/randn_sparse/lambda=.1//res_over_iter_quantiles.txt} ;
					\addplot+[green, thick, solid] table[y=median_res_esrk]{MATLAB/randn_sparse/lambda=.1//res_over_iter_quantiles.txt} ;
					\addplot[green!10] fill between[of=ESRKmin and ESRKmax];
					\addplot+[name path=ESRK25, draw=none] table[y=quant25_res_esrk]{MATLAB/randn_sparse/lambda=.1//res_over_iter_quantiles.txt} ;
					\addplot+[name path=ESRK75, draw=none] 	table[y=quant75_res_esrk]{MATLAB/randn_sparse/lambda=.1//res_over_iter_quantiles.txt} ;
					\addplot[green!20] fill between[of=ESRK25 and ESRK75];
					
					\addplot+[name path=SRKREMmin, draw=none] table[y=min_res_hb_double_inexact]{MATLAB/randn_sparse/lambda=.1//res_over_iter_quantiles.txt} ;
					\addplot+[name path=SRKREMmax, draw=none] table[y=max_res_hb_double_inexact]{MATLAB/randn_sparse/lambda=.1//res_over_iter_quantiles.txt} ;
					\addplot+[red, thick] table[y=median_res_hb_double_inexact]{MATLAB/randn_sparse/lambda=.1//res_over_iter_quantiles.txt} ;
					\addplot[red!10, fill opacity = 0.5] fill between[of=SRKREMmin and SRKREMmax];
					\addplot+[name path=SRKREM25, draw=none] table[y=quant25_res_hb_double_inexact]{MATLAB/randn_sparse/lambda=.1//res_over_iter_quantiles.txt} ;
					\addplot+[name path=SRKREM75, draw=none] 			table[y=quant75_res_hb_double_inexact]{MATLAB/randn_sparse/lambda=.1//res_over_iter_quantiles.txt} ;
					\addplot[red!20] fill between[of=SRKREM25 and SRKREM75];		
					
					\addplot+[name path=SRKEMmin, draw=none] table[y=min_res_hb_opt_beta]{MATLAB/randn_sparse/lambda=.1//res_over_iter_quantiles.txt} ;
					\addplot+[name path=SRKEMmax, draw=none] table[y=max_res_hb_opt_beta]{MATLAB/randn_sparse/lambda=.1//res_over_iter_quantiles.txt} ;
					\addplot+[blue, densely dotted] table[y=median_res_hb_opt_beta]{MATLAB/randn_sparse/lambda=.1//res_over_iter_quantiles.txt} ;
					\addplot[blue!10, fill opacity = 0.5] fill between[of=SRKEMmin and SRKEMmax];
					\addplot+[name path=SRKEM25, draw=none] table[y=quant25_res_hb_opt_beta]{MATLAB/randn_sparse/lambda=.1//res_over_iter_quantiles.txt} ;
					\addplot+[name path=SRKEM75, draw=none] 				table[y=quant75_res_hb_opt_beta]{MATLAB/randn_sparse/lambda=.1/res_over_iter_quantiles.txt} ;
					\addplot[blue!20] fill between[of=SRKEM25 and SRKEM75];	
					
				\end{axis}
			\end{tikzpicture}
		
			&
			
			\begin{tikzpicture}
				\begin{axis}[
					thin,smooth,no markers,
					ymode=log,     
					width = 0.45\textwidth, 
					xlabel = {$k$}, 
					ylabel = {$\|\A x_k-b\|_2/\|b\|_2$}, 
					xtick = {500, 1000, 1500}, 
					ytick = {1e-12, 1e-8, 1e-4, 1e0},  
					ymin = 1e-16, ymax = 1e3,
					]
					
					\addplot+[name path=SRKmin, draw=none] table[y=min_res_srk]{MATLAB/randn_sparse/lambda=5/res_over_iter_quantiles.txt} ;
					\addplot+[name path=SRKmax, draw=none] table[y=max_res_srk]{MATLAB/randn_sparse/lambda=5/res_over_iter_quantiles.txt} ;
					\addplot+[black, thick] table[y=median_res_srk]{MATLAB/randn_sparse/lambda=5/res_over_iter_quantiles.txt} ;
					\addplot[black!10] fill between[of=SRKmin and SRKmax];
					\addplot+[name path=SRK25, draw=none] table[y=quant25_res_srk]{MATLAB/randn_sparse/lambda=5/res_over_iter_quantiles.txt} ;
					\addplot+[name path=SRK75, draw=none] 	table[y=quant75_res_srk]{MATLAB/randn_sparse/lambda=5/res_over_iter_quantiles.txt} ;
					\addplot[black!20] fill between[of=SRK25 and SRK75];
					
					\addplot+[name path=ESRKmin, draw=none] table[y=min_res_esrk]{MATLAB/randn_sparse/lambda=5/res_over_iter_quantiles.txt} ;
					\addplot+[name path=ESRKmax, draw=none] table[y=max_res_esrk]{MATLAB/randn_sparse/lambda=5/res_over_iter_quantiles.txt} ;
					\addplot+[green, thick, solid] table[y=median_res_esrk]{MATLAB/randn_sparse/lambda=5/res_over_iter_quantiles.txt} ;
					\addplot[green!10] fill between[of=ESRKmin and ESRKmax];
					\addplot+[name path=ESRK25, draw=none] table[y=quant25_res_esrk]{MATLAB/randn_sparse/lambda=5/res_over_iter_quantiles.txt} ;
					\addplot+[name path=ESRK75, draw=none] 	table[y=quant75_res_esrk]{MATLAB/randn_sparse/lambda=5/res_over_iter_quantiles.txt} ;
					\addplot[green!20] fill between[of=ESRK25 and ESRK75];
					
					\addplot+[name path=SRKREMmin, draw=none] table[y=min_res_hb_double_inexact]{MATLAB/randn_sparse/lambda=5/res_over_iter_quantiles.txt} ;
					\addplot+[name path=SRKREMmax, draw=none] table[y=max_res_hb_double_inexact]{MATLAB/randn_sparse/lambda=5/res_over_iter_quantiles.txt} ;
					\addplot+[red, thick] table[y=median_res_hb_double_inexact]{MATLAB/randn_sparse/lambda=5/res_over_iter_quantiles.txt} ;
					\addplot[red!10, fill opacity = 0.5] fill between[of=SRKREMmin and SRKREMmax];
					\addplot+[name path=SRKREM25, draw=none] table[y=quant25_res_hb_double_inexact]{MATLAB/randn_sparse/lambda=5/res_over_iter_quantiles.txt} ;
					\addplot+[name path=SRKREM75, draw=none] 			table[y=quant75_res_hb_double_inexact]{MATLAB/randn_sparse/lambda=5/res_over_iter_quantiles.txt} ;
					\addplot[red!20] fill between[of=SRKREM25 and SRKREM75];		
					
					\addplot+[name path=SRKEMmin, draw=none] table[y=min_res_hb_opt_beta]{MATLAB/randn_sparse/lambda=5/res_over_iter_quantiles.txt} ;
					\addplot+[name path=SRKEMmax, draw=none] table[y=max_res_hb_opt_beta]{MATLAB/randn_sparse/lambda=5/res_over_iter_quantiles.txt} ;
					\addplot+[blue, densely dotted] table[y=median_res_hb_opt_beta]{MATLAB/randn_sparse/lambda=5/res_over_iter_quantiles.txt} ;
					\addplot[blue!10, fill opacity = 0.5] fill between[of=SRKEMmin and SRKEMmax];
					\addplot+[name path=SRKEM25, draw=none] table[y=quant25_res_hb_opt_beta]{MATLAB/randn_sparse/lambda=5/res_over_iter_quantiles.txt} ;
					\addplot+[name path=SRKEM75, draw=none] 				table[y=quant75_res_hb_opt_beta]{MATLAB/randn_sparse/lambda=5/res_over_iter_quantiles.txt} ;
					\addplot[blue!20] fill between[of=SRKEM25 and SRKEM75];	
					
				\end{axis}
			\end{tikzpicture}
			
			\end{tabular} 
			
			\vspace{0.3cm}
			% legend
			\begin{tikzpicture}[yscale=0.16,xscale=0.012,thick]
				\draw[black] (-230,100) --
				(-180,100)node[right]{\small SRK};
				\draw[green] (-80,100) -- 
				(-30,100)node[right]{\small ESRK};
				\draw[blue] (90,100) --
				(140,100)node[right]{\small SRK-EM};
				\draw[red] (280,100) --
				(330,100)node[right]{\small SRK-REM};
			\end{tikzpicture}
			
		\end{center}
		
		\caption{Experiment (I) with normal distributed matrix $\A$, $(m,n) = (200,500)$, $\hat x$ with $10$ normal distributed nonzero entries, $50$ random repeats. Left: $\lambda=0.1$, right: $\lambda=5$. Thick line shows median over all trials, light area is between min and max, darker area indicates 25th and 75th quantile. In SRK-EM, in case that $\|d_k^*\|_2 > \texttt{eps}$, we performed SRK steps.}
		
		\label{fig:randn_quantiles}
	\end{figure}
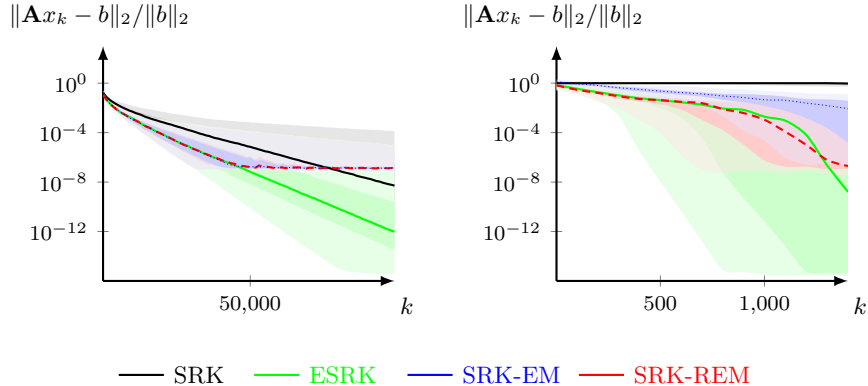

	\begin{table}[htb]
	\begin{center}
		\begin{tabular}{cccc}\toprule
			& CPU time (min) &CPU time (mean) & CPU time (max) \\ 
			\midrule
			SRK & 0.050 & * & * \\
			ESRK & 0.043 & 0.206 & 1.901 \\ 
			SRK-EM & 0.108 & 0.311 & 1.592 \\
			SRK-REM & \bf{0.003} & \bf{0.011} & \bf{0.102} \\
			\bottomrule
			% \hline 
			% SRKHB with $\beta=0.5$ &  & & \\
			% \hline 
			% SRKHB with $\beta=0.96$ & & & \\
			%	\hline 
			% SRKHB with $\beta=0.97$ & & &
		\end{tabular}
	\end{center}
          \caption{CPU time (s) until a relative residual $\|\A x_k-b\|_2 / \|b\|_2 < 10^{-6}$ is achieved in the problem from Experiment (I), cf. Figure~\ref{fig:randn_quantiles}, right subfigure ($\lambda=5$). Here, ``*'' indicates that the relative residual was not achieved after less than $10^5$ iterations. Columns display minimum, average and maximum time over $50$ random repeats. SRK: sparse Kaczmarz, ESRK: exact-step sparse Kaczmarz, SRK-EM: sparse Kaczmarz with exact momentum, SRK-REM: sparse Kaczmarz with relaxed exact momentum.}
		\label{tab:randn_runtime}
	\end{table}

	In experiment (II), we test the methods on matrices from the \emph{SuiteSparseCollection}~\cite{DH11}. To obtain a consistent system, here we chose $\hat x$ with $10\%$ nonzero entries, standard normally distributed, and set $b= \A\hat x$ with the respective matrix $\A$. The results are given in Figure~\ref{fig:SuiteSparse_1} and Figure~\ref{fig:SuiteSparse_2}. We observe different convergence behaviour in several examples. In the examples in Figure~\ref{fig:SuiteSparse_1}, the SRK-REM method converges fastest initially w.r.t. runtime, and is outperformed by the SRK-EM method for small residuals, which then converges faster than all other methods. In the examples in Figure~\ref{fig:SuiteSparse_2}, the SRK method is not visibly accelerated by our methods even over iterations. In the last example, all accelerations except the SRK-REM method perform worse than vanilla SRK.

\begin{figure}[htb]

\begin{center}

\begin{tikzpicture}
	%\pgfplotsset{small}
	\matrix {
		
		\begin{axis}[
			thin,smooth,no markers,
			ymode=log,     
			width = 0.45\textwidth, 
			xlabel = {$k$}, 
			ylabel = {$\|\A x_k-b\|_2/\|b\|_2$}, 
			xtick = {500}, 
			ytick = {1e-12, 1e-8, 1e-4, 1e0},  
			ymin = 1e-12, ymax = 1e0,
			]
			
			\addplot [black] table[x=k, y=res_srk]{MATLAB/SuiteSparseCollection/results/ash85/res_over_iter.txt} ;	
			\addplot [green] table[x=k, y=res_esrk]{MATLAB/SuiteSparseCollection/results/ash85/res_over_iter.txt} ;	
			\addplot [blue] table[x=k, y=res_hb_opt_beta]{MATLAB/SuiteSparseCollection/results/ash85/res_over_iter.txt} ;	
			\addplot [red] table[x=k, y=res_hb_double_inexact]{MATLAB/SuiteSparseCollection/results/ash85/res_over_iter.txt} ;	
			
		\end{axis}
		&
		\begin{axis}[
			thin,smooth,no markers,
			ymode=log,     
			width = 0.45\textwidth, 
			xlabel = {\hspace{1cm} time (s)}, 
			ylabel = {$\|\A x_k-b\|_2/\|b\|_2$}, 
			xtick = {0.02}, 
			ytick = {1e-12, 1e-8, 1e-4, 1e0},  
			ymin = 1e-12, ymax = 1e0,
			]
			
			\addplot [black] table[x=rt_srk, y=res_srk]{MATLAB/SuiteSparseCollection/results/ash85/res_over_iter.txt} ;	
			\addplot [green] table[x=rt_esrk, y=res_esrk]{MATLAB/SuiteSparseCollection/results/ash85/res_over_iter.txt} ;	
			\addplot [blue] table[x=rt_hb_opt_beta, y=res_hb_opt_beta]{MATLAB/SuiteSparseCollection/results/ash85/res_over_iter.txt} ;	
			\addplot [red] table[x=rt_hb_double_inexact, y=res_hb_double_inexact]{MATLAB/SuiteSparseCollection/results/ash85/res_over_iter.txt} ;	
			
		\end{axis} \\
		
	\begin{axis}[
		thin,smooth,no markers,
		ymode=log,     
		width = 0.45\textwidth, 
		xlabel = {$k$}, 
		ylabel = {$\|\A x_k-b\|_2/\|b\|_2$}, 
		xtick = {10000}, 
		ytick = {1e-12, 1e-8, 1e-4, 1e0},  
		ymin = 1e-12, ymax = 1e0,
		]
		
		\addplot [black] table[x=k, y=res_srk]{MATLAB/SuiteSparseCollection/results/well1033/res_over_iter.txt} ;	
		\addplot [green] table[x=k, y=res_esrk]{MATLAB/SuiteSparseCollection/results/well1033/res_over_iter.txt} ;	
		\addplot [blue] table[x=k, y=res_hb_opt_beta]{MATLAB/SuiteSparseCollection/results/well1033/res_over_iter.txt} ;	
		\addplot [red] table[x=k, y=res_hb_double_inexact]{MATLAB/SuiteSparseCollection/results/well1033/res_over_iter.txt} ;	
		
	\end{axis}
	&
	\begin{axis}[
		thin,smooth,no markers,
		ymode=log,     
		width = 0.45\textwidth, 
		xlabel = {\hspace{1cm} time (s)}, 
		ylabel = {$\|\A x_k-b\|_2/\|b\|_2$}, 
		xtick = {0.1}, 
		ytick = {1e-12, 1e-8, 1e-4, 1e0},  
		ymin = 1e-12, ymax = 1e0,
		]
		
		\addplot [black] table[x=rt_srk, y=res_srk]{MATLAB/SuiteSparseCollection/results/well1033/res_over_iter.txt} ;	
		\addplot [green] table[x=rt_esrk, y=res_esrk]{MATLAB/SuiteSparseCollection/results/well1033/res_over_iter.txt} ;	
		\addplot [blue] table[x=rt_hb_opt_beta, y=res_hb_opt_beta]{MATLAB/SuiteSparseCollection/results/well1033/res_over_iter.txt} ;	
		\addplot [red] table[x=rt_hb_double_inexact, y=res_hb_double_inexact]{MATLAB/SuiteSparseCollection/results/well1033/res_over_iter.txt} ;	
		\end{axis}	\\ 	
		};
\end{tikzpicture}

		\vspace{0.3cm}
		% legend
		\begin{tikzpicture}[yscale=0.16,xscale=0.012,thick]
			\draw[black] (-230,100) --
			(-180,100)node[right]{\small SRK};
			\draw[green] (-80,100) -- 
			(-30,100)node[right]{\small ESRK};
			\draw[blue] (90,100) --
			(140,100)node[right]{\small SRK-EM};
			\draw[red] (280,100) --
			(330,100)node[right]{\small SRK-REM};
		\end{tikzpicture}
		
	\end{center}

	\caption{Experiment (II) with matrices from the SuiteSparseCollection, relative residuals over iterations (left) and runtime (right), $\lambda=1$. First row of figures: \texttt{ash85} ($85\times 85$), second row: \texttt{well1033} ($1033\times 320$). Thick line shows median over all trials, light area is between min and max, darker area indicates 25th and 75th quantile. In SRK-EM, in case that $\|d_k^*\|_2 > 10^{-6}$, we executed the step form SRK.}
	
	\label{fig:SuiteSparse_1}

\end{figure}
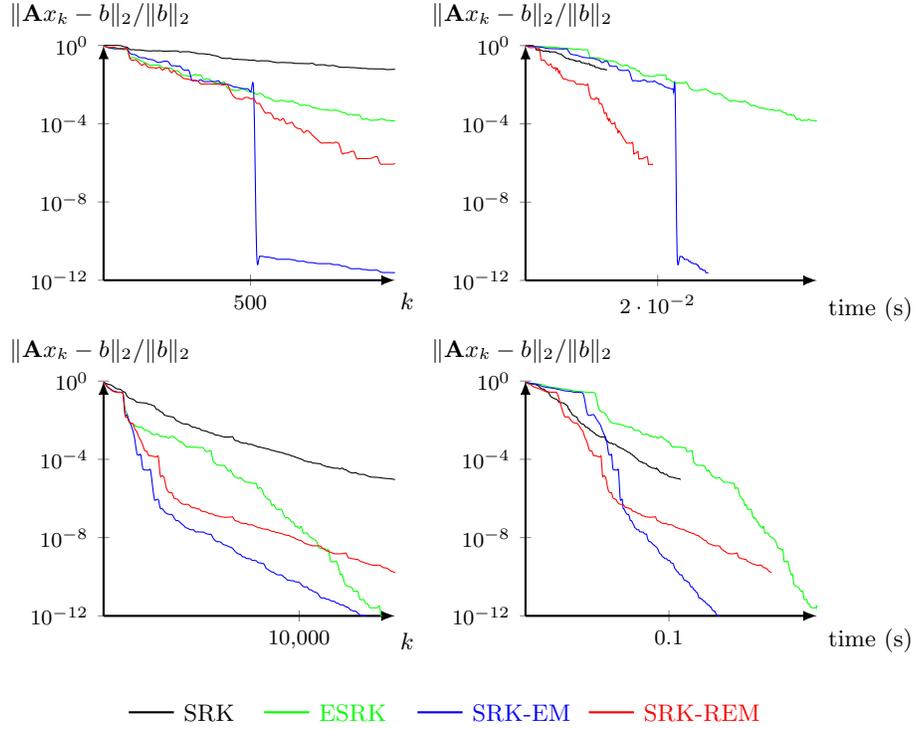

	Finally, we study an academic CT example (experiment (III)) using a fan beam tomography matrix from the \emph{AIR tools} package~\cite{HS12}. We choose $\hat x$ and $b$ as in experiment (II) and set $\lambda=1$. The results are given in Figure~\ref{fig:fanbeamtomo}. We can see that all modifications of the vanilla sparse Kaczmarz method speed up convergence in terms of iterations. However, with respect to computation time the proposed SRK-REM method is the only of our accelerations which actually gives acceleration.
	
	We also comment on an important detail for implementation. If $\|d_k^*\|_2$ becomes small, the condition of the optimization problem in the SRK-EM method can worsen drastically, which then leads to instabilities and high oscillating errors. We therefore replace the if-condition in Algorithm \ref{alg:SMD_EM} by the condition $\|d_k^*\|_2 > \texttt{tol}$ in practice. The choice of \texttt{tol} is critical and depends on the concrete problem instances. For too small values of \texttt{tol}, the method may oscillate, while for too large values of \texttt{tol}, convergence may be slowed down. In experiment (I), we found that $\texttt{tol} = \texttt{eps}$ is a good choice, where $\texttt{eps}\approx 2\cdot 10^{-16}$ is the MATLAB precision value. For $\texttt{tol}=10^{-6}$, convergence was slowed down so much that in $10^5$ iterations a relative residual of $10^{-6}$ was not achieved, which was not the case for any other acceleration in the experiment (Table~\ref{tab:randn_runtime}). In experiment (II) the choice $\texttt{tol}=\texttt{eps}$ lead to high oscillation in the errors and we mostly obtained good results with $\texttt{tol}=10^{-6}$.

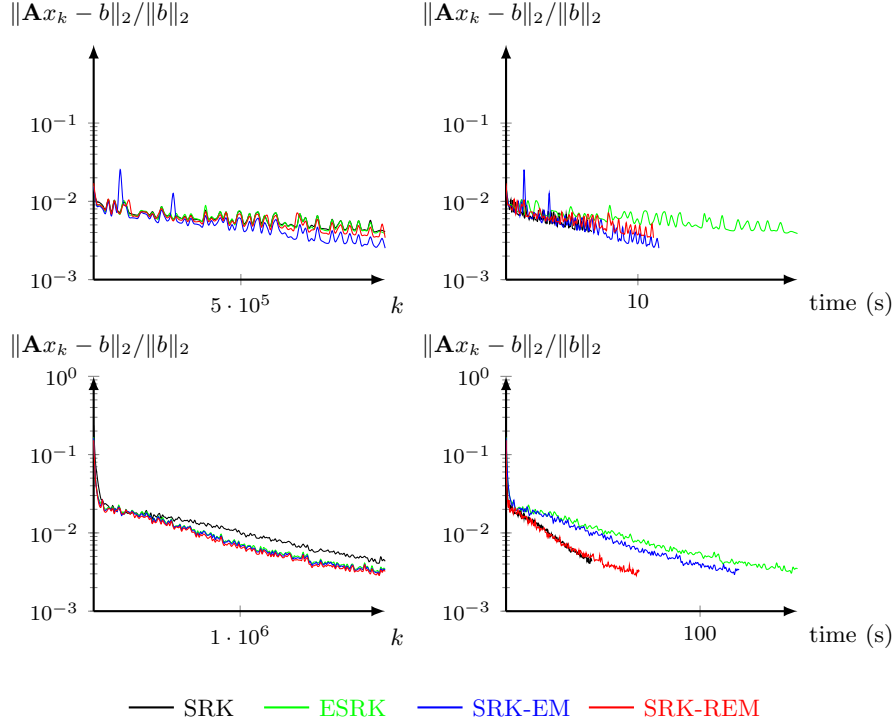
\begin{figure}[htb]
	
	\begin{center}
		
		\begin{tikzpicture}
			%\pgfplotsset{small}
			\matrix {

	\begin{axis}[
	thin,smooth,no markers,
	ymode=log,     
	width = 0.45\textwidth, 
	xlabel = {$k$}, 
	ylabel = {$\|\A x_k-b\|_2/\|b\|_2$}, 
	xtick = {5e5}, 
	ytick = {1e-3, 1e-2, 1e-1},  
	ymin = 1e-3, ymax = 1e0,
	]
	
	\addplot [black] table[x=k, y=res_srk]{MATLAB/SuiteSparseCollection/results/illc1033/res_over_iter.txt} ;	
	\addplot [green] table[x=k, y=res_esrk]{MATLAB/SuiteSparseCollection/results/illc1033/res_over_iter.txt} ;	
	\addplot [blue] table[x=k, y=res_hb_opt_beta]{MATLAB/SuiteSparseCollection/results/illc1033/res_over_iter.txt} ;	
	\addplot [red] table[x=k, y=res_hb_double_inexact]{MATLAB/SuiteSparseCollection/results/illc1033/res_over_iter.txt} ;	
	
\end{axis}
&
\begin{axis}[
	thin,smooth,no markers,
	ymode=log,     
	width = 0.45\textwidth, 
	xlabel = {\hspace{1cm} time (s)}, 
	ylabel = {$\|\A x_k-b\|_2/\|b\|_2$}, 
	xtick = {10}, 
	ytick = {1e-3, 1e-2, 1e-1},  
	ymin = 1e-3, ymax = 1e0,
	]
	
	\addplot [black] table[x=rt_srk, y=res_srk]{MATLAB/SuiteSparseCollection/results/illc1033/res_over_iter.txt} ;	
	\addplot [green] table[x=rt_esrk, y=res_esrk]{MATLAB/SuiteSparseCollection/results/illc1033/res_over_iter.txt} ;	
	\addplot [blue] table[x=rt_hb_opt_beta, y=res_hb_opt_beta]{MATLAB/SuiteSparseCollection/results/illc1033/res_over_iter.txt} ;	
	\addplot [red] table[x=rt_hb_double_inexact, y=res_hb_double_inexact]{MATLAB/SuiteSparseCollection/results/illc1033/res_over_iter.txt} ;	
	
\end{axis} \\ 	

\begin{axis}[
	thin,smooth,no markers,
	ymode=log,     
	width = 0.45\textwidth, 
	xlabel = {$k$}, 
	ylabel = {$\|\A x_k-b\|_2/\|b\|_2$}, 
	xtick = {1e6}, 
	ytick = {1e-3, 1e-2, 1e-1, 1e0},  
	ymin = 1e-3, ymax = 1e0,
	]
	
	\addplot [black] table[x=k, y=res_srk]{MATLAB/SuiteSparseCollection/results/landmark/res_over_iter.txt} ;	
	\addplot [green] table[x=k, y=res_esrk]{MATLAB/SuiteSparseCollection/results/landmark/res_over_iter.txt} ;	
	\addplot [blue] table[x=k, y=res_hb_opt_beta]{MATLAB/SuiteSparseCollection/results/landmark/res_over_iter.txt} ;	
	\addplot [red] table[x=k, y=res_hb_double_inexact]{MATLAB/SuiteSparseCollection/results/landmark/res_over_iter.txt} ;	
	
\end{axis}
&
\begin{axis}[
	thin,smooth,no markers,
	ymode=log,     
	width = 0.45\textwidth, 
	xlabel = {\hspace{1cm} time (s)}, 
	ylabel = {$\|\A x_k-b\|_2/\|b\|_2$}, 
	xtick = {100}, 
	ytick = {1e-3, 1e-2, 1e-1, 1e0},  
	ymin = 1e-3, ymax = 1e0,
	]
	
	\addplot [black] table[x=rt_srk, y=res_srk]{MATLAB/SuiteSparseCollection/results/landmark/res_over_iter.txt} ;	
	\addplot [green] table[x=rt_esrk, y=res_esrk]{MATLAB/SuiteSparseCollection/results/landmark/res_over_iter.txt} ;	
	\addplot [blue] table[x=rt_hb_opt_beta, y=res_hb_opt_beta]{MATLAB/SuiteSparseCollection/results/landmark/res_over_iter.txt} ;	
	\addplot [red] table[x=rt_hb_double_inexact, y=res_hb_double_inexact]{MATLAB/SuiteSparseCollection/results/landmark/res_over_iter.txt} ;	
	
\end{axis} \\ 	
};

\end{tikzpicture}

\vspace{0.3cm}
% legend
\begin{tikzpicture}[yscale=0.16,xscale=0.012,thick]
\draw[black] (-230,100) --
(-180,100)node[right]{\small SRK};
\draw[green] (-80,100) -- 
(-30,100)node[right]{\small ESRK};
\draw[blue] (90,100) --
(140,100)node[right]{\small SRK-EM};
\draw[red] (280,100) --
(330,100)node[right]{\small SRK-REM};
\end{tikzpicture}

\end{center}

\caption{Experiment (II) with matrices from the SuiteSparseCollection, relative residuals over iterations (left) and runtime (right), $\lambda=1$. First row of figures: \texttt{ill1033} ($1033\times 320$), second row: \texttt{landmark} ($71952\times 2704$). Thick line shows median over all trials, light area is between min and max, darker area indicates 25th and 75th quantile. In SRK-EM, in case that $\|d_k^*\|_2 > 10^{-6}$, we executed the step form SRK.}

\label{fig:SuiteSparse_2}

\end{figure}

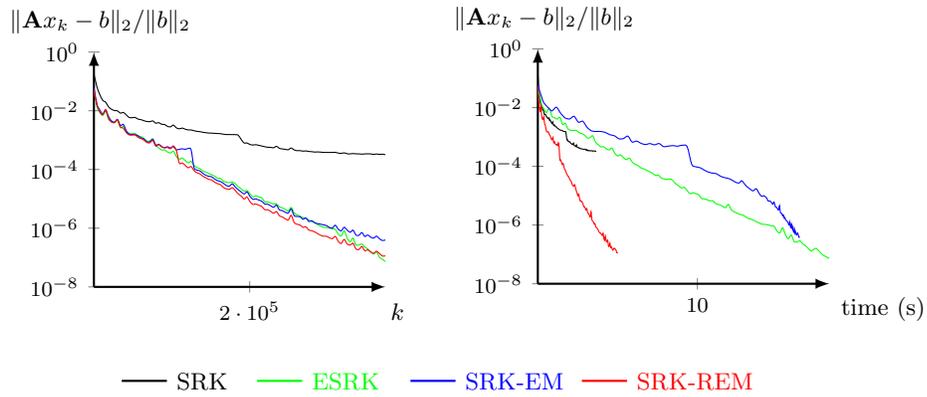
\begin{figure}[htb]

\begin{center}
    
    \begin{tabular}{rl}
        
        \begin{tikzpicture}
            \begin{axis}[
                thin,smooth,no markers,
                ymode=log,     
                width = 0.45\textwidth, 
                xlabel = {$k$}, 
                ylabel = {$\|\A x_k-b\|_2/\|b\|_2$}, 
                xtick = {2e5}, 
                ytick = {1e-8, 1e-6, 1e-4, 1e-2, 1e0},  
                ymin = 1e-8, ymax = 1e0,
                ]
                
                \addplot [black] table[ y=res_srk]{MATLAB/CT/results/fanbeamtomo/res_over_iter.txt} ;	
                \addplot [green] table[ y=res_esrk]{MATLAB/CT/results/fanbeamtomo/res_over_iter.txt} ;	
                \addplot [blue] table[ y=res_hb_opt_beta]{MATLAB/CT/results/fanbeamtomo/res_over_iter.txt} ;	
                \addplot [red] table[ y=res_hb_double_inexact]{MATLAB/CT/results/fanbeamtomo/res_over_iter.txt} ;	
                
            \end{axis} 
        \end{tikzpicture}
        
        &
        
        \begin{tikzpicture}
            \begin{axis}[
                thin,smooth,no markers,
                ymode=log,     
                width = 0.45\textwidth, 
                xlabel = {\hspace{1cm} time (s)}, 
                ylabel = {$\|\A x_k-b\|_2/\|b\|_2$}, 
                xtick = {10}, 
                ytick = {1e-8, 1e-6, 1e-4, 1e-2, 1e0},  
                ymin = 1e-8, ymax = 1e0,
                ]
                
                \addplot [black] table[x=rt_srk, y=res_srk]{MATLAB/CT/results/fanbeamtomo/res_over_iter.txt} ;	
                \addplot [green] table[x=rt_esrk, y=res_esrk]{MATLAB/CT/results/fanbeamtomo/res_over_iter.txt} ;	
                \addplot [blue] table[x=rt_hb_opt_beta, y=res_hb_opt_beta]{MATLAB/CT/results/fanbeamtomo/res_over_iter.txt} ;	
                \addplot [red] table[x=rt_hb_double_inexact, y=res_hb_double_inexact]{MATLAB/CT/results/fanbeamtomo/res_over_iter.txt} ;	
                
            \end{axis} 
        \end{tikzpicture}
        
    \end{tabular} 
    
    \vspace{0.3cm}
    % legend
    \begin{tikzpicture}[yscale=0.16,xscale=0.012,thick]
        \draw[black] (-230,100) --
        (-180,100)node[right]{\small SRK};
        \draw[green] (-80,100) -- 
        (-30,100)node[right]{\small ESRK};
        \draw[blue] (90,100) --
        (140,100)node[right]{\small SRK-EM};
        \draw[red] (280,100) --
        (330,100)node[right]{\small SRK-REM};
    \end{tikzpicture}
    
\end{center}

\caption{Experiment (III) with an academic CT matrix $\A$ arising from fan beam tomography, $(m,n) = (3780,900)$, $\hat x$ with $10\%$ normal distributed nonzero entries, $\lambda=1$. In SRK-EM, in case that $\|d_k^*\|_2 > 10^{-6}$, we executed the step form SRK.}

\label{fig:fanbeamtomo}
\end{figure}

\clearpage

\bibliography{refs}

\bibliographystyle{abbrv}
	
\end{document}